\title{Asymptotics for the number of simple $(4a+1)$-knots of genus 1}
\author{Alison Beth Miller}
\address{Mathematical Reviews\\ 416 Fourth Street, Ann Arbor, MI 48103}
\email{alimil@umich.edu}
\newtheorem{thm}{Theorem}[section]
\newtheorem{lemma}[thm]{Lemma}
\newtheorem{prop}[thm]{Proposition}
\newtheorem{cor}[thm]{Corollary}
\newtheorem{heuristic}{Heuristic}
\newtheorem*{heuristic*}{Heuristic}
\theoremstyle{definition}
\newtheorem*{defn}{Definition}
\theoremstyle{remark}
\newtheorem*{rem}{Remark}
\newtheorem*{question}{Question}
\DeclareMathOperator{\Aut}{Aut}
\DeclareMathOperator{\Frac}{Frac}
\DeclareMathOperator{\tr}{tr}
\DeclareMathOperator{\Cl}{Cl}
\DeclareMathOperator{\lk}{lk}
\DeclareMathOperator{\vol}{vol}
\DeclareMathOperator{\content}{content}
\DeclareMathOperator{\Disc}{Disc}
\newcommand{\isom}{\cong}
\newcommand{\Wedge}{\bigwedge}
\def\to{\rightarrow}
\def\p{\mathfrak{p}}
\def\R{\mathbb{R}}
\def\Q{\mathbb{Q}}
\def\Z{\mathbb{Z}}
\def\cO{\mathcal{O}}
\DeclareMathOperator{\SL}{SL}
\DeclareMathOperator{\Sp}{Sp}
\newcommand{\into}{\hookrightarrow}
\begin{document}
\maketitle
\begin{abstract}
We investigate the asymptotics of the total number of simple $(4a+1)$-knots with Alexander polynomial of the form $mt^2 +(1-2m) t + m$ for some nonzero $m \in [-X, X]$. Using Kearton and Levine's classification of simple knots, we give equivalent algebraic and arithmetic formulations of this counting question.  In particular, this count is the same as the total number of $\Z[1/m]$-equivalence classes of binary quadratic forms of discriminant $1-4m$, for $m$ running through the same range.   Our heuristics, based on the Cohen-Lenstra heuristics, suggest that this total is asymptotic to $X^{3/2}/\log X$, and the largest contribution comes from the values of $m$ that are positive primes.  Using sieve methods, we prove that the contribution to the total coming from $m$ positive prime is bounded above by $O(X^{3/2}/\log X)$, and that the total itself is $o(X^{3/2})$.

\end{abstract}

\section{Introduction}

In this paper we will count  simple $(4a+1)$-knots by way of invariants with arithmetic structure.   Informally, an $n$-knot is a knotted copy of $S^n$ in $S^{n+2}$: for a formal definition see Section~\ref{knot background}.  For $n \ge 5$, it is impossible to classify all $n$-knots, but there is a restricted family, the simple $n$-knots, which have been completely classified by classical algebraic invariants.  In this paper we will look at the case $n \equiv 1 \pmod 4$ and $n >1$.  The case where $n \equiv 3 \pmod 4$ has many similarities, but also a few differences  and would be an interesting subject for further research.

There is a natural definition of the genus for $(4a+1)$-knots.  For $a \ge 1$ this definition has the property that the degree of the Alexander polynomial of a genus $g$ simple $(4a+1)$-knot is precisely $2g$ (though this is is very far from being true in general for the familiar low-dimensional case $a = 0$).

\subsection{Questions and Heuristics}
We are interested in the general question: for a given positive integer $g$, what are the asymptotics of the number of distinct $(4a+1)$-knots with squarefree Alexander polynomial of degree $2g$ and coefficients of  ``size'' bounded by $X$?  It is known by results of Bayer and Michel~\cite{bayer-michel-finitude} that this number is always finite.  In this paper we address the case of $g = 1$.   In this case, the only possible Alexander polynomials are of the form $\Delta_m = m t^2 + (1- 2m)t + m$ for $m \in \Z$ and the question becomes:

\begin{question}[Counting question, knot version]
Fix $a \ge 1$.
Asymptotically, what is the total number of equivalence classes of simple $(4a+1)$-knots of genus $1$ with Alexander polynomial of the form $\Delta_m$ for $|m| \le X$?
\end{question}

Perhaps surprisingly, the answer to this question does not depend on the value of $a$, but is uniform for all $a \ge 1$.  This follows from the classification of simple knots in terms of their Alexander module and Blanchfield pairing \cite{kearton-simple}.  Using this classification, we can transform our counting question into an equivalent question which is entirely algebraic: 

\begin{question}[Counting question, equivalent Alexander/Blanchfield version]
Fix $a \ge 0$.
Asymptotically, what is the total number of isomorphism  classes are there of Alexander modules with Blanchfield pairing coming from simple $(4a+1)$-knots with Alexander polynomial of the form $\Delta_m$ for $|m| \le X$?
\end{question}
Note that this question has a uniform answer for all $a \ge 0$.

We will show that the question of counting knots is also equivalent to the following statement in number theory:
\begin{question}[Counting question, equivalent quadratic form version]
Asymptotically, what is the total, over all $m$ with $|m| \le X$, of the number of $\SL_2(\Z[\frac{1}{m}])$-equivalence classes of binary quadratic forms over $\Z[\frac{1}{m}]$ having discriminant $1-4m$?
\end{question}

The asymptotic behavior of the number of equivalence classes of binary quadratic forms with bounded discriminant has been studied by many mathematician, starting with Gauss.  The difficulty in Question 1 comes from inverting the varying integer $m$. 
We will see  that there is a clear split in quantitative behavior between the cases where the constant term of the Alexander polynomial is a positive prime and the cases where it is composite or negative.  Using the theory of binary quadratic forms, we propose the following heuristics.   In Section~\ref{heuristics} we justify these heuristics using the Cohen-Lenstra-Hooley heuristics for binary quadratic forms plus some additional reasonable assumptions.

\begin{heuristic}\label{prime heuristic}
The total number of distinct Alexander modules (with pairing) having Alexander polynomial equal to  $\Delta_p$ for some prime $p$ in the range $[1, X]$ is asymptotic to a constant times $X^{3/2}/\log X$.
\end{heuristic}

\begin{heuristic}\label{negative prime heuristic}
The total number of distinct Alexander modules (with pairing) having Alexander polynomial equal to  $\Delta_{-p}$ for some prime $p$ in the range $[1, X]$ is asymptotic to a constant times $X \log X$.
\end{heuristic}

\begin{heuristic}\label{not prime}
The total number of distinct Alexander modules (with pairing) having Alexander polynomial equal to  $m t^2 + (1-2m)t + m$ for $m$ running over all integers in the range $[-X, X]$ with $|m|$ not prime is asymptotic to a constant times $X \log X$.
\end{heuristic}

%Of these heuristics, Heuristic~\ref{not prime} is the most tenuous and warrants further investigation.

\subsection{Results}
The difficulty in proving these heuristics is that we are in general counting quadratic forms over rings with infinite unit group.  However, the total contribution from $m$ prime and positive can be bounded above using Rosser's sieve.

\begin{thm}\label{prime upper}

The total number of isotopy classes of simple $(4a+1)$-knots having Alexander polynomial equal to  $p t^2 + (1-2p)t + p$ for $p$ running over all primes in the range $[1, X]$ is (unconditionally) bounded above by $O(X^{3/2}/\log X)$ (that is, it is $\ll X^{3/2}/\log(X)$).
\end{thm}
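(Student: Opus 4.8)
The plan is to reduce the theorem to a purely analytic estimate for the average size of the class number $h(1-4p)$ as $p$ runs over primes, and then to prove that estimate by combining the classical reduction theory of positive definite binary quadratic forms with an upper-bound sieve. The first step is to discard the effect of inverting $p$. By the classification recalled above, each knot in question gives rise to an integral binary quadratic form of discriminant $1-4p$, and we count these up to $\SL_2(\Z[\tfrac1p])$-equivalence. Since $\SL_2(\Z)\subset\SL_2(\Z[\tfrac1p])$, this equivalence is coarser than $\SL_2(\Z)$-equivalence, so passing to the larger group only merges orbits; hence the number of classes for a given $p$ is at most the ordinary form class number $h(1-4p)=|\Cl(1-4p)|$. (In fact it equals $h(1-4p)/\ord([\p])$ for $[\p]$ the class of a prime above $p$, but for an upper bound I simply drop the denominator.) It therefore suffices to show
\[
\sum_{p\le X} h(1-4p)\ \ll\ \frac{X^{3/2}}{\log X}.
\]

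Next I would rewrite the left-hand side as a lattice-point count over reduced forms. Because $1-4p<0$, every class of discriminant $1-4p$ has a unique reduced representative $ax^2+bxy+cy^2$ with $\gcd(a,b,c)=1$ and $-a<b\le a\le c$ and $b^2-4ac=1-4p$; here $b$ is necessarily odd. Thus $\sum_{p\le X}h(1-4p)$ equals the number of primitive reduced triples $(a,b,c)$ for which $p:=ac-\tfrac{b^2-1}{4}$ is a prime not exceeding $X$. The reduction inequalities $|b|\le a\le c$ give $4ac=b^2-1+4p\le a^2+4X\le ac+4X$, so $3ac\le 4X$ and in particular the outer variable satisfies $a\ll X^{1/2}$.

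For each fixed pair $(a,b)$ the remaining condition is that $p\le X$ be a prime lying in the single residue class $-\tfrac{b^2-1}{4}\pmod a$ — equivalently, that the linear form $c\mapsto ac-\tfrac{b^2-1}{4}$ take a prime value, with the reduction constraints confining $c$, hence $p$, to an interval. This is exactly a one-dimensional prime-detection problem, to which Rosser's (linear) sieve applies, yielding a Brun–Titchmarsh-type bound
\[
\#\{\,p\le X \text{ prime}: p\equiv\beta_{a,b}\!\!\pmod a\,\}\ \ll\ \frac{X}{\phi(a)\,\log(X/a)}
\]
uniformly in $a$, with an essentially nonzero count only when $\gcd(\beta_{a,b},a)=1$. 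Summing over the $\asymp\phi(a)$ admissible residues $b\pmod a$ gives $\ll X/\log(X/a)$ for each $a$; since $a\ll X^{1/2}$ we have $\log(X/a)\gg\log X$, and summing the resulting $\ll X/\log X$ over the $\ll X^{1/2}$ values of $a$ produces the claimed bound $X^{3/2}/\log X$.

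The main obstacle is the required uniformity in the modulus: the sum over $a$ runs all the way up to $X^{1/2}$, well beyond the range in which asymptotics for primes in arithmetic progressions are available, so I cannot use a prime-counting asymptotic and must instead invoke a sieve upper bound valid uniformly for every $a\le X^{1/2}$. The entire saving comes from the factor $\log(X/a)\gg\log X$ furnished by the sieve, which is precisely what upgrades the trivial estimate $O(X^{3/2})$ to $O(X^{3/2}/\log X)$. The remaining points are technical and do not affect the order of magnitude: a careful count of the admissible residues $b\pmod a$ (so as to control $\sum_{a\le X^{1/2}}\tfrac{1}{\phi(a)}\,\#\{b\}\ll X^{1/2}$), the bookkeeping of the primitivity and reduction constraints, and the negligible contribution of the non-fundamental discriminants among the $1-4p$.
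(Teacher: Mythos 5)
Your proposal is correct, but it reaches the bound by a genuinely different route than the paper. The paper first uses Corollaries~\ref{part1} and~\ref{part2} to identify the knot count exactly with the count of $\SL_2(\Z)$-classes of integral forms of discriminant $1-4p$, and then runs Rosser's sieve \emph{on the variable $m$ itself}: the quantity sieved is the full three-dimensional lattice-point count $\#\{(a,b,c)\in\Z^3\cap\mathcal F: b^2-4ac=1-4m\}$, the congruence sums $S_d(X)$ are estimated by Davenport's lemma after truncating the cusp of the fundamental domain, the sieve dimension is computed from $\rho(d)=\prod_{p\mid d}(p+1)/p^2$, and the level of distribution $Z=X^{\alpha}$, $\alpha<1/4$, is what limits the error term. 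You instead slice the reduced-form count by the pair $(a,b)$, observe that for fixed $(a,b)$ the primality of $p=ac-\tfrac{b^2-1}{4}$ is a one-variable prime-detection problem in a single residue class modulo $a$ with $a\ll X^{1/2}$, and invoke Brun--Titchmarsh uniformly; the saving $\log(X/a)\gg\log X$ plays the role of the paper's $\prod_{p<Z}(1-\rho(p))\asymp 1/\log Z$. Your route is more elementary and modular (no Davenport lemma, no cusp geometry, no explicit sieve weights, and you only need the trivial direction that $\SL_2(\Z[\tfrac1p])$-equivalence is coarser than $\SL_2(\Z)$-equivalence, not the paper's Corollary~\ref{part2}), at the cost of some bookkeeping you correctly flag: the non-admissible residues (at most one prime each, total $O(X)$), the bound $\#\{b\}\ll\phi(a)$ or alternatively $\sum_{a\le X^{1/2}}a/\phi(a)\ll X^{1/2}$, and the imprimitive forms, which the paper's lattice count absorbs automatically but which you must sum separately over contents $f$ (trivially for $f$ large, by the same argument with modulus $f^2a$ for $f$ small). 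The paper's global sieve setup is the one that generalizes to higher genus, where no explicit reduction-theoretic slicing by $(a,b)$ is available; your argument exploits the special structure of binary forms. One small inaccuracy that does not affect the proof: for $p$ prime the number of $\SL_2(\Z[\tfrac1p])$-classes is not $h(1-4p)/\ord([\mathfrak p])$ but exactly the oriented class number, since by Proposition~\ref{bijection} the localization map on \emph{oriented} class groups is injective; you only use the upper bound, so this is harmless.
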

\begin{rem}
In this paper, we use both Vinogradov's notation and big O notation as convenient: both $a \ll b$ and $a = O(b)$ mean that $a(X)$ is at most a constant times $b(X)$ for sufficiently large $X$.  We also use $a \asymp b$ to mean that $a$ has the same asymptotic order of growth as $b$: that is, $a \ll b$ and $b \ll a$.
\end{rem}

We expect that the asymptotic in Theorem~\ref{prime upper} is sharp.  However, our sieve methods are not powerful enough to give a corresponding lower bound. Instead we will be able to apply the Brauer-Siegel theorem pointwise to obtain

\begin{thm}\label{prime lower}
The total number of isotopy classes of simple $(4a+1)$-knots having Alexander polynomial equal to  $\Delta_p$ for $0 \le p \le X$ is $\gg (X^{3/2-\epsilon})$ for all $\epsilon > 0$.
\end{thm}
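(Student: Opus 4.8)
The plan is to reduce the knot count, via the quadratic-form reformulation, to a sum of class numbers of imaginary quadratic orders, and then to bound each summand below using the Brauer--Siegel theorem. Since we have fixed $p$ positive, the discriminant $D_p := 1 - 4p$ is negative, so the relevant binary quadratic forms are definite and the order $\cO_{D_p}$ has \emph{finite} unit group; this is exactly what makes the positive-prime case accessible by a direct class-number analysis, circumventing the infinite-unit-group difficulty that obstructs the general problem.

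The first step is to pin down the count for a single prime $p$. By the reduction to forms, the number of knots with Alexander polynomial $\Delta_p$ equals the number of $\SL_2(\Z[1/p])$-equivalence classes of forms of discriminant $D_p$, i.e.\ the order of $\Pic(\cO_{D_p}[1/p])$. Inverting $p$ quotients $\Pic(\cO_{D_p})$ by the subgroup generated by the classes of the primes over $p$. The key observation is that this subgroup is trivial: since $D_p \equiv 1 \pmod p$ is a square modulo $p$, the prime $p$ splits as $\p \bar\p$, while the principal form of discriminant $D_p$ is exactly $x^2 + xy + p y^2$, which primitively represents $p$ at $(x,y) = (0,1)$. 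Hence $[\p]$ is principal, $[\bar\p] = [\p]^{-1}$ is likewise trivial, and inverting $p$ does not change the class group at all. Therefore the knot count for $\Delta_p$ equals $h(D_p)$, the class number of the order of discriminant $D_p$ (up to a bounded factor coming from the precise shape of the Kearton--Levine classification, which is immaterial for a lower bound). This is precisely the feature that distinguishes primes from composite $m$: for composite $m$ only the \emph{product} of the primes over the factors of $m$ is forced to be principal, so inverting $m$ can collapse the class group substantially.

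The second step is to invoke Brauer--Siegel pointwise. For an imaginary quadratic order the regulator is trivial, so the Brauer--Siegel theorem (equivalently, Siegel's lower bound for $L(1,\chi)$ together with the class number formula) gives $h(D_p) \gg_\epsilon |D_p|^{1/2-\epsilon} \gg_\epsilon p^{1/2-\epsilon}$ for every $\epsilon > 0$, with an ineffective but $p$-independent implied constant. (If one prefers to avoid the non-maximal order, one may restrict to the positive density of primes $p$ for which $1-4p$ is squarefree, hence fundamental, and apply Siegel's theorem directly.) Summing over a dyadic range and using Chebyshev's bound $\pi(X) - \pi(X/2) \gg X/\log X$, we get $\sum_{X/2 < p \le X} h(D_p) \gg_\epsilon X^{1/2-\epsilon}\cdot X/\log X \gg X^{3/2-2\epsilon}$. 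Since this dyadic range is a subset of $\{p \le X\}$ and $\epsilon$ is arbitrary, the total over all $p \le X$ is $\gg X^{3/2-\epsilon}$, as claimed.

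The main obstacle is really the first step: one must identify the $\SL_2(\Z[1/p])$-class count exactly (or up to bounded factors) with the classical class number $h(D_p)$, i.e.\ confirm that passing from $\Z$ to $\Z[1/p]$ loses nothing, which is what the principality of $\p$ secures. A secondary point to flag is that Siegel's bound is ineffective, so the resulting lower bound is ineffective as well; this is harmless for an asymptotic statement of the form $\gg X^{3/2-\epsilon}$. Finally, note that only the lower-bound direction of Brauer--Siegel is needed here, so the delicate upper bounds on class numbers and the sieve machinery required for Theorem~\ref{prime upper} play no role.
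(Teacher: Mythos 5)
Your proposal is correct and follows essentially the same route as the paper: reduce the prime case to the ordinary class number $h(1-4p)$ by showing that inverting $p$ does not shrink the class group (your principality argument via $x^2+xy+py^2$ representing $p$, i.e.\ $(\gamma_p)$ having norm $p$, is exactly the content of the paper's Proposition~\ref{bijection} and Corollary~\ref{part2}), then apply the Brauer--Siegel theorem pointwise together with the class number formula for non-maximal orders and sum over the $\gg X/\log X$ primes. The only cosmetic differences are your dyadic summation and your explicit re-derivation of the prime-case isomorphism, which the paper had already established in Section~\ref{consequences}.
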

Like the Brauer-Siegel theorem, this result is ineffective.

\begin{rem}
Both of these results should generalize to $g > 1$.  To prove the upper bound, we will need to replace Gauss's asymptotics for binary quadratic forms with asymptotics for $\Sp_{2g}$-orbits on $2g$-ary quadratic forms.  In a forthcoming paper \cite{miller_sp2g}, the author obtains such bounds on $\Sp_{2g}$-orbits, and she hopes to apply these bounds to knot theory in future work.

For the lower bound, it should be possible to replace the Brauer-Siegel theorem with a relative Brauer-Siegel theorem for CM extensions.   However there may be technical issues regarding the cases of non-maximal orders.
\end{rem}

For the cases where $m$ is not a positive prime, it is much harder to obtain sharp results.  For instance, in the case of $m = -p$, Theorem~\ref{prime upper} remains true with exactly the same proof, but Heuristic~\ref{negative prime heuristic} suggests a strictly lower order of growth. 

However, we will show the following weak upper bound on the total:
\begin{thm}\label{aggregate}
The total number of distinct Alexander modules (with pairing) having Alexander polynomial equal to  $m t^2 + (1-2m)t + m$ for $m$ running over all integers in the range $[-X, X]$ is bounded above by $o(X^{3/2})$.
\end{thm}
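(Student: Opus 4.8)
The plan is to use the quadratic-form reformulation (Question 3) to write the quantity in Theorem~\ref{aggregate} as $T(X)=\sum_{0<|m|\le X}h_m$, where $h_m$ is the number of $\SL_2(\Z[1/m])$-classes of forms of discriminant $1-4m$. By the dictionary between forms and ideal classes established earlier, $h_m=h(1-4m)/|H_m|$, where $h(1-4m)=|\Cl(\cO_{1-4m})|$ (the narrow class number in the real case) and $H_m\le\Cl(\cO_{1-4m})$ is the subgroup generated by the classes of the primes dividing $m$; this comes from the exact sequence relating $S$-class groups to class groups, for $S$ the set of places above $m$. I then split $T(X)$ into three ranges: $m$ a positive prime, $m$ negative, and $m$ a positive composite. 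The positive-prime range is $O(X^{3/2}/\log X)=o(X^{3/2})$ by Theorem~\ref{prime upper}, so the work is in the other two.

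For $m$ negative the discriminant $D=1-4m$ is positive, and I would not even use the saving from $H_m$: bounding $h_m\le h(D)$ and applying the real-quadratic class number formula $h(D)\asymp\sqrt D\,L(1,\chi_D)/\log\epsilon_D$ together with the unconditional regulator bound $\epsilon_D\ge\sqrt D$ (so $\log\epsilon_D\ge\tfrac12\log D$) gives $h(D)\ll\sqrt m\,L(1,\chi_{1-4m})/\log m$. Summing and invoking the bounded first moment $\sum_{m\le X}L(1,\chi_{1-4m})\ll X$ yields $\sum_{m<0}h_m\ll X^{3/2}/\log X=o(X^{3/2})$. The only subtlety is the bookkeeping for non-fundamental discriminants $1-4m=f^2D_0$, where the local factors $\prod_{p\mid f}\bigl(1-\Legendre{D_0}{p}p^{-1}\bigr)$ are harmlessly bounded.

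For $m$ a positive composite I exploit a useful coincidence: since $1-4m\equiv1\pmod{\ell}$ for every prime $\ell\mid m$, the discriminant $D=1-4m$ is a square modulo each such $\ell$, so every prime factor of $m$ splits in $\cO_D$ and is prime to the conductor, hence invertible. Let $\ell$ be the smallest prime factor, so $\ell\le\sqrt m$, and let $n=\ord([\p_\ell])$. A generator $\pi$ of $\p_\ell^{\,n}$ has $N\pi=\ell^{\,n}$; since $D<0$ and $\pi$ cannot be rational (as $\p_\ell\ne\bar\p_\ell$), the norm form forces $\ell^{\,n}=\tfrac14(u^2+|D|v^2)\ge|D|/4\gg m$, whence $n\gg\log m/\log\ell$. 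Therefore $|H_m|\ge n\gg\log m/\log\ell$ and
\[
h_m\ll\frac{h(D)\,\log\ell}{\log m}.
\]
Summing over composite $m$, replacing $\log m$ by $\log X$ for $m>\sqrt X$ (the range $m\le\sqrt X$ contributing only $O(X^{3/4+\epsilon})$), reduces the problem to the weighted average
\[
\sum_{m\le X}h(1-4m)\,\log\ell(m)\ll X^{3/2}\log\log X,
\]
where $\ell(m)$ denotes the smallest prime factor of $m$; given this, the composite range is $\ll X^{3/2}\log\log X/\log X=o(X^{3/2})$.

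The main obstacle is this last weighted estimate. Writing $h(1-4m)\ll\sqrt m\,L(1,\chi_{1-4m})$ and conditioning on $\ell(m)=p$ (so $m=pk$ with $k$ free of prime factors below $p$), it amounts to a first-moment bound $\sum_{k}L(1,\chi_{1-4pk})\ll\#\{k\}$ that is uniform as $p$ and the sieved range of $k$ vary; the factor $\log\log X$ then appears through $\sum_{p\le X}(\log p)(p\log p)^{-1}\asymp\sum_{p\le X}p^{-1}$. Making this moment bound uniform in the modulus — controlling the relevant character sums via P\'olya--Vinogradov or the large sieve while respecting the sieve condition that $k$ have no prime factor below $p$ — is the technical heart of the argument, and is where any genuine difficulty lies; the structural steps above are comparatively soft.
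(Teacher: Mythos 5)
Your strategy---pass to ideal classes, lower-bound the subgroup killed by inverting $m$, and treat negative $m$, positive primes, and positive composites separately---parallels the paper's, but the execution diverges in two of the three ranges, and one of them has a genuine gap. For $m<0$ your argument is complete and in fact sharper than the paper's: the paper quotes Siegel's asymptotic $\sum_{D} h^+(D)r(D)\sim cX^{3/2}$ and concludes only $o(X^{3/2})$ from $r(D)\to\infty$, whereas your regulator bound $\log\epsilon_D\gg\log D$ plus the first moment of $L(1,\chi_D)$ gives $O(X^{3/2}/\log X)$. For positive composite $m$, your lower bound $|H_m|\ge\ord([\p_{\ell}])\gg\log m/\log\ell(m)$ via the norm form is correct and is a nice alternative to the paper's Lemma~\ref{key lemma}, which instead shows the classes $[(s,\gamma_m),s]^2$ are pairwise distinct for the divisors $s\le m^{1/4}$, giving $k_m\ge\omega(m)-3$. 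The gap is the weighted estimate $\sum_{m\le X}h(1-4m)\log\ell(m)\ll X^{3/2}\log\log X$, which you flag but do not prove, and which is not a routine verification: for $m=pq$ with both primes exceeding $m^{1/4}$ your kernel bound saves only a bounded factor, so you need the first moment of $L(1,\chi_{1-4m})$ over the sifted family $\{m=pq\le X\}$ (of density $\asymp\log\log X/\log X$) to be $O(1)$ per element. The trivial bound $L(1,\chi)\ll\log X$ gives $X^{3/2}\log\log X$ for this subfamily alone, which is not $o(X^{3/2})$, so a nontrivial moment bound---uniform in the modulus and compatible with the sieve condition on $k$---is genuinely required and is left unsupplied.

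The paper avoids any $L$-function moments over sifted sets by a softer dichotomy: for $\omega(m)\ge N$ the kernel bound already gives a $1/N$ saving against the full Gauss total $\sum_m h^+(1-4m)\asymp X^{3/2}$, while the $m$ with $\tau_{\le r}(m)<N$ form a set cut out by congruence conditions modulo $\operatorname{lcm}(1,\dotsc,r)$ of density $\rho(N,r)\to0$ as $r\to\infty$, so Davenport's lemma bounds their class-number total by $\rho(N,r)\kappa_2X^{3/2}+O_{N,r}(X\log X)$; letting $r\to\infty$ and then $N\to\infty$ finishes. If you want to keep your route, the cheapest repair is to use your $\ord([\p_{\ell}])$ bound only when $\ell(m)\le m^{1/4}$, where it gives an unbounded saving, and to handle the remaining $m$ (which have $\omega(m)\le 3$ and form a density-zero set detectable by congruences) with the density/Davenport argument instead of an $L$-function moment. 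Two smaller bookkeeping points: the forms of discriminant $1-4m$ need not be primitive, so $T(X)$ decomposes over contents $d$ into class numbers of the suborders $\cO_{(1-4m)/d^2}[\frac{1}{m}]$ (the paper disposes of this by dominated convergence in $d$); and the kernel of $\iota_*$ on oriented classes is generated by the squares $[\p,p]^2$ rather than the classes themselves, which costs only a bounded factor but should be said.
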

We do not find an explicit rate of growth for our upper bound, but the proof indicates that the ratio implicit in the little $o$ notation should go to $0$ very slowly.  In particular, this is a much worse bound than the $O(X^{3/2}/\log X)$ bound in Theorem~\ref{prime upper}. 

\subsection{Acknowledgments}
I thank Manjul Bhargava for leading me to explore this problem, and for his guidance and support throughout the process.  Thanks also to Barry Mazur for helpful advice on the organization of this paper, and Arul Shankar, Alexander Smith, Lenny Ng for comments on various drafts.  I also thank the anonymous referee for detailed feedback that greatly improved the exposition of the paper.  Much of this paper was originally part of the author's Ph.D. thesis at Princeton University, where she was supported by an NSF Graduate Fellowship and an NDSEG Fellowship.

\section{The connection between knot theory and arithmetic}
\subsection{Preliminaries on knots}\label{knot background}
There are multiple different ways to formalize the notion of an $n$-knot.  The following two definitions are both commonly used:
\begin{defn}
\begin{enumerate}[(i)]
\item An {\em $n$-knot $K$} is a PL (piecewise linear) topologically embedded copy of $S^n$ in $S^{n+2}$ that is locally flat (locally homeomorphic to $\R^n \subset \R^{n+2}$).  Equivalence is given by ambient isotopy.

\item An {\em $n$-knot} is a smoothly embedded submanifold $K$ of $S^{n+2}$ that is homeomorphic to $S^n$ (but not necessarily diffeomorphic; $K$ might be an exotic sphere).  Equivalence is induced by orientation-preserving diffeomorphisms of the $S^{n+2}$.
\end{enumerate}

In both cases, we will consider both $S^n$ and $S^{n+2}$ as oriented manifolds, so that reversing the orientation of either or both may give an inequivalent knot.
\end{defn}

Although it is far from obvious, the classification results we use will give the same answer regardless of which of the two definitions above is used. I will talk about knots and equivalence with the understanding that all statements hold using either formulation.

\begin{defn}
The knot $K$ is called {\em simple} if $\pi_i(S^{n+2} - K) = \pi_i(S^{1})$ for all $i \le (n-1)/2$.
\end{defn}

For a simple $(4a+1)$-knot, we define the {\em Alexander module} of $K$ as $H^{2a+1}(Y; \Z)$ where $Y$ is the (unique) infinite cyclic cover of the knot complement $S^{n+2} \setminus K$ (one can show this is the only nontrivial homology group of $Y$).  The Alexander module is naturally a module over $\Z[t, t^{-1}]$, and also carries a natural duality pairing, known as the Blanchfield pairing.  In the case $a = 0$ this is the standard definition of the Alexander module of a (1-)knot. 

The Alexander module is annihilated by the Alexander polynomial $\Delta_K(t)$, another fundamental knot invariant, so is naturally a module over the quotient $\Z[t, t^{-1}] / \Delta_K(t)$: this gives a first indication that there is something interesting going on arithmetically.  

The Alexander module, and particularly the Blanchfield pairing, can be unwieldly to work with.  However, there is an easier route to the information carried in these invariants, via Seifert hypersurfaces and Seifert pairings.   

\begin{defn}
A {\em Seifert hypersurface} in $S^{n+2}$ is a compact oriented $(n+1)$-manifold $V$ with boundary such that $K = \partial V$ is homeomorphic to $S^n$.  We say that $V$ is a Seifert hypersurface for the knot $K$.
\end{defn}
(We again have the choice of working in either the PL category or the smooth category, and again all results we use hold uniformly in both cases.)

\begin{defn} 
A {\em simple Seifert hypersurface} in $S^{n+2}$ is said to be {\em simple} if $V$ is $\frac{n-1}{2}$-connected.
\end{defn}

It is known that the simple knots are exactly those with simple Seifert hypersurfaces.

\begin{thm}
If $V$ is a simple Seifert hypersurface in $S^{n+2}$, then $\partial V$ is a simple $n$-knot.  Conversely, any simple $n$-knot $K$ has a (non-unique) simple Seifert hypersurface.
\end{thm}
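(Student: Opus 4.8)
The plan is to prove the two implications separately. Throughout, write $m = (n-1)/2$ (an integer since $n$ is odd, and $m \ge 2$ in the relevant range $n \ge 5$), let $X = S^{n+2} \setminus K$ be the knot complement, and recall that $K$ is simple exactly when $\pi_1(X) \cong \Z$ and $\pi_i(X) = 0$ for $2 \le i \le m$. Given a Seifert hypersurface $V$, fix a bicollar $V \times [-1,1] \into S^{n+2}$ and let $W = S^{n+2} \setminus (V \times (-1,1))$ be the manifold obtained by cutting $S^{n+2}$ along $V$; its boundary contains two pushed-off copies $V_{\pm} = V \times \{\pm 1\}$, and $W \simeq S^{n+2}\setminus V$. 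The infinite cyclic cover $Y \to X$ is assembled from countably many copies $W_i$ of $W$, glued so that $V_+ \subset \partial W_i$ is identified with $V_- \subset \partial W_{i+1}$, with the deck transformation shifting $W_i \mapsto W_{i+1}$.

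For the forward direction, suppose $V$ is $m$-connected. Alexander duality in $S^{n+2}$ gives $\tilde H_i(W) \cong \tilde H^{\,n+1-i}(V)$; Lefschetz duality for the compact oriented $(n+1)$-manifold $V$ rewrites this as $\tilde H_i(W) \cong H_i(V,\partial V)$; and since $\partial V \cong S^n$, the long exact sequence of the pair identifies $H_i(V,\partial V) \cong \tilde H_i(V)$ in the range $1 \le i \le m$. Hence $\tilde H_i(W) = 0$ for $i \le m$. An innermost-disk general-position argument (a loop in $W$ bounds a disk in $S^{n+2}$ whose circles of intersection with $V$ can be removed using $\pi_1(V)=0$) shows $\pi_1(W)=0$, so Hurewicz makes $W$ itself $m$-connected. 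Building up $Y$ from the $W_i$ glued along the $m$-connected pieces $V$, van Kampen applied along the line of $W_i$'s gives $\pi_1(Y)=0$, while the same reasoning around the circle of $W_i$'s covering $X$ gives $\pi_1(X)\cong\Z$; a Mayer--Vietoris sequence then yields $\tilde H_i(Y)=0$ for $i \le m$, so $Y$ is $m$-connected. Since $\pi_i(X)\cong\pi_i(Y)$ for $i \ge 2$, we get $\pi_i(X)=0$ for $2 \le i \le m$, i.e. $K$ is simple. (Note that $H_{m+1}(Y)$, the Alexander module in the middle dimension, is allowed to be nonzero, consistently with $m$-connectivity.)

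For the converse, first produce any Seifert hypersurface: since $H^1(X)\cong\Z$ by Alexander duality, choose a map $f\colon X \to S^1$ realizing a generator, make it transverse to a regular value, and cap off the preimage across the tubular neighborhood of $K$ to obtain an oriented $V_0$ with $\partial V_0 = K$. It remains to raise the connectivity of $V_0$ by ambient surgery in $S^{n+2}$, fixing $\partial V_0 = K$. Inductively, assume $V_0$ is $(i-1)$-connected with $i \le m$; represent a class in $\pi_i(V_0)$ by an embedded framed sphere $S^i \into \on{int} V_0$, which is possible since $2i < n+1$. As $S^{n+2}$ is highly connected this sphere is null-homotopic there, and by general position together with the Whitney trick it bounds an embedded disk $D^{i+1}$ meeting $V_0$ only along its boundary, lying to one side of $V_0$; performing ambient surgery along $D^{i+1}$ replaces the class. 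After finitely many steps $V_0$ becomes $m$-connected, giving a simple Seifert hypersurface.

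The main obstacle is this surgery step in the converse. Two points require the dimension hypothesis and genuine care: first, the surgery sphere must have trivial normal bundle in $V_0$ and bound an \emph{embedded} disk in a single complementary component, which is precisely where general position and the Whitney trick enter and force both $i$ strictly below the middle dimension $(n+1)/2$ and the condition $n \ge 5$ (so that the ambient Whitney disks embed); second, one must check that surgery on a class of dimension $i < (n+1)/2$ only creates new homology in the complementary dimension $n+1-i > (n+1)/2$, hence never disturbs the already-killed range below the middle, so that the induction terminates. By contrast, the forward direction is routine once the duality chain and the van Kampen/Hurewicz steps are in place, the only subtlety there being the upgrade from homological to homotopical connectivity of $W$ and $Y$.
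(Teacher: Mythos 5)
The paper does not actually prove this theorem; its ``proof'' is a citation to Theorem~0.5 of Farber, who assembles the statement from results of Levine and Trotter. Your forward direction is essentially the standard argument from that literature and is correct in outline: the chain of Alexander and Lefschetz dualities shows the cut-open complement $W$ has vanishing reduced homology through degree $m$, the innermost-disk argument gives $\pi_1(W)=1$, and Hurewicz together with van Kampen and Mayer--Vietoris over the pieces $W_i$ gives the $m$-connectivity of $Y$ and hence the simplicity of $K$.

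The converse, however, has a genuine gap at the surgery step, and it is fatal. You never use the hypothesis that $K$ is simple; if your argument were right it would show that \emph{every} $n$-knot has an $m$-connected Seifert hypersurface, and then your own forward direction would force every $n$-knot to be simple --- which is false, since for example spun classical knots give $n$-knots with nonabelian knot group for every $n$. The specific false step is the claim that general position and the Whitney trick produce an embedded disk $D^{i+1}$ meeting $V_0$ only along $\partial D^{i+1}$. General position cannot remove intersections of an $(i+1)$-disk with the $(n+1)$-manifold $V_0$ inside $S^{n+2}$, since $(i+1)+(n+1)\ge n+2$ for all $i\ge 0$, and the obstruction to removing them is precisely the class of the pushed-off sphere in $H_i(S^{n+2}\setminus V_0)\cong H_i(V_0,\partial V_0)$, which is typically nonzero. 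The correct argument (Levine's) uses the simplicity of $K$ at exactly this point: the vanishing of $H_i(Y)$ for $i\le m$ forces the Mayer--Vietoris map built from the two push-offs $i_{+*}$ and $i_{-*}$ to be an isomorphism in the relevant range, and this is what supplies, at each stage, classes in $H_i(V_0)$ whose push-off to one side dies in the complement; only those classes admit ambient surgery disks, and one must then check that they suffice to kill all of $H_i(V_0)$. Your observation that surgery below the middle dimension of $V_0$ only creates new homology above the middle dimension is correct and is indeed needed for the induction to terminate, but it does not repair the missing existence of the surgery disks.
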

\begin{proof}
Farber states this as Theorem~0.5 in \cite{farber-simple}, where he deduces it from results of Levine~\cite{levine-unknotting,  levine-knot_modules} and Trotter~\cite{trotter-s-equivalence}.
\end{proof}

  If $V$ is a simple Seifert hypersurface in $S^{4a+3}$ it follows from the Hurewicz theorem and Poincare duality that $H_i(V, \Z)$ is trivial for all $i$ except $i = 2a+1$, and that $H_{2a+1}(V, \Z)$ is a free $\Z$-module of even rank.  We now recall the standard definition of the the Seifert pairing on the homology of $V$.
  
\begin{defn} 
 The {\em Seifert pairing} is the pairing  on $H_{2a+1}(V, \Z)$  defined by $\langle \alpha, \beta \rangle = \lk(\alpha^+, \beta)$ where $\lk$ denotes the linking (Alexander duality) pairing $H_{2a+1}(V; \Z) \times H_{2a+1}(S^{4a+3} \setminus V; \Z) \to \Z$, and $\alpha^+ \in { H_{2a+1}(S^{4a+3} \setminus V; \Z)}$ is obtained by pushing $\alpha$ off $V$ in the positive normal direction.
 
 The Seifert matrix $P$ of $V$ is the matrix of this a pairing with respect to a basis of $H_{2a+1}(V, \Z)$. 
 \end{defn}
   
  The Seifert pairing is neither symmetric nor skew-symmetric, so the same is true of its matrix $P$.  However, the skew-symmetric part of the Seifert pairing is equal to the intersection pairing, so we know that ${\det (P - P^t) = 1}$.  This turns out to be the only constraint on $P$.

\begin{defn}
If $V$ is a simple Seifert surface in $S^{4a+3}$, we define the genus of $V$ to be half the rank of $H_{2a+1}(V, \Z)$.

The genus of a $(4a+1)$-knot $K$ is the minimum genus of any Seifert surface for $K$.
\end{defn}

 In the classical low-dimensional case $a =0$, the genus is a subtle geometric invariant of $1$-knots.  However for  $a > 1$, the genus of a simple knot $K$ is easily computed from the Alexander polynomial of $K$, which in turn can be computed from any Seifert matrix.
 
 \begin{thm}\cite{levine-polynomial}\label{alexander poly formula}
 If $V$ is any simple Seifert surface for $K$, and $P$ is a Seifert matrix for $V$, then 
 \begin{equation} 
 \Delta_K = t^{-\dim \ker P} (\det t P - P^t)
 \end{equation}
 is the normalized Alexander polynomial of $K$, and in particular is independent of $V$.
 \end{thm}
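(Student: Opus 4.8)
The plan is to compute the Alexander module directly from the Seifert surface $V$ and identify its order ideal; since the Alexander polynomial is by definition a generator of that order ideal, this both establishes the formula and, because $H^{2a+1}(Y;\Z)$ is built intrinsically from $K$, forces independence of $V$. Write $\Lambda = \Z[t,t^{-1}]$ and let $2g$ be the rank of the free $\Z$-module $H_{2a+1}(V;\Z)$. First I would realize the infinite cyclic cover concretely: fix a bicollar $V \times [-1,1]$ of $V$ and cut the knot exterior $X = S^{4a+3}\setminus K$ along $V$ to obtain a cobordism $W$ whose boundary contains two copies $V_-, V_+$ of $V$. Then $Y = \bigl(\bigsqcup_{j \in \Z} W_j\bigr)/\!\sim$, where $V_+ \subset \partial W_j$ is glued to $V_- \subset \partial W_{j+1}$ and the generator $t$ of the deck group sends $W_j$ to $W_{j+1}$.

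The second step is the Mayer--Vietoris sequence of this decomposition, a long exact sequence of $\Lambda$-modules
\[
\cdots \to H_i(V)\otimes\Lambda \xrightarrow{\,t\,i_+ - i_-\,} H_i(W)\otimes\Lambda \to H_i(Y) \to H_{i-1}(V)\otimes\Lambda \to \cdots,
\]
where $i_{\pm}\colon V \to W$ are the two boundary inclusions. Here simplicity does the essential work: since $V$ is $2a$-connected its only nonvanishing reduced homology is $H_{2a+1}(V) = \Z^{2g}$, and Alexander duality in $S^{4a+3}$ (using $W \simeq S^{4a+3}\setminus V$) shows $H_i(W)$ is likewise concentrated in degree $2a+1$, where it is free of rank $2g$. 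The sequence therefore degenerates to
\[
0 \to H_{2a+1}(V)\otimes\Lambda \xrightarrow{\,t\,i_+ - i_-\,} H_{2a+1}(W)\otimes\Lambda \to H_{2a+1}(Y) \to 0,
\]
the left map being injective because (as computed below) its determinant is a nonzero element of $\Lambda$.

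The crux is to identify the two inclusion-induced maps. Under the Alexander-duality isomorphism $H_{2a+1}(W) \cong H_{2a+1}(S^{4a+3}\setminus V) \cong H_{2a+1}(V)^{*}$, the linking pairing is exactly the pairing used to define the Seifert form, so pushing a class off $V$ in the positive normal direction realizes $i_+$ as the Seifert matrix $P$ while pushing off negatively realizes $i_-$ as $P^t$ (this asymmetry is the whole point of the Seifert pairing). Hence, in dual bases, $t\,i_+ - i_-$ is the matrix $tP - P^t$ and $H_{2a+1}(Y) \cong \cok(tP - P^t)$ as a $\Lambda$-module. The zeroth Fitting (order) ideal of the cokernel of a square matrix over $\Lambda$ is generated by its determinant, so this ideal is $\bigl(\det(tP - P^t)\bigr)$; passing from $H_{2a+1}$ to the cohomology Alexander module $H^{2a+1}(Y;\Z)$ changes the order ideal only by a unit of $\Lambda$ (via the duality relation $t \leftrightarrow t^{-1}$). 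Thus $\Delta_K$ and $\det(tP-P^t)$ generate the same ideal, i.e. agree up to a factor $\pm t^{j}$.

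Finally I would pin down the normalization. Since the order ideal is well defined only up to the units $\pm t^{j}$, the normalized Alexander polynomial is the unique generator with nonzero constant term, obtained by dividing out the largest power of $t$; the remaining content of the theorem is the assertion that this power is exactly $\dim\ker P$. This is a finite linear-algebra check: the constant and leading coefficients of $\det(tP - P^t)$ are both equal to $\det P$, and using the unimodularity $\det(P - P^t) = 1$ one verifies that the order of vanishing at $t = 0$ is precisely $\dim\ker P$ (for genus one, $\det(tP-P^t) = (\det P)(t^2+1) + (1-2\det P)\,t$, which when $\det P = 0$ reduces to $t$ and so vanishes at $t=0$ to order $1 = \dim\ker P$). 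Because $\det(tP-P^t)$ is palindromic, the resulting representative is symmetric and satisfies $\Delta_K(1) = \det(P - P^t) = 1$, the standard normalization, and being a canonical generator of an intrinsic invariant it is independent of $V$. I expect the main obstacle to be the crux step: correctly matching $i_+$ and $i_-$ with $P$ and $P^t$ demands careful tracking of orientations, of the sense of the normal push-offs, and of the Alexander-duality sign conventions, since a sign error there would interchange $P$ and $P^t$ or alter the power of $t$. The connectivity reductions are routine once the high-dimensional hypothesis is in force, and the normalization is a short computation.
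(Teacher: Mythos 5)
The paper offers no proof of this theorem: it is stated as a citation to Levine \cite{levine-polynomial}, and the corollary following it is the only place a (one-line, also cited) argument appears. What you have written is essentially a reconstruction of Levine's original argument, and it is the standard one: cut the knot exterior along $V$, assemble the infinite cyclic cover $Y$ from $\Z$-many copies of the cut-open piece $W$, run Mayer--Vietoris over $\Lambda=\Z[t,t^{-1}]$, use $2a$-connectivity of $V$ plus Alexander duality to kill everything outside degree $2a+1$, and identify $t\,i_+-i_-$ with $tP-P^t$ so that the Alexander module is $\cok(tP-P^t)$ and its order ideal is $(\det(tP-P^t))$. That is correct in outline, and your genus-one computation $\det(tP-P^t)=(\det P)(t^2+1)+(1-2\det P)t$ is exactly the source of the paper's $\Delta_m=mt^2+(1-2m)t+m$ with $m=\det P$, which is the only case the paper actually uses.

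The one step you assert rather than prove is the normalization claim that $\ord_{t=0}\det(tP-P^t)=\dim\ker P$ for an arbitrary (possibly singular) Seifert matrix. Palindromicity plus $\det(P-P^t)=1$ does not give this: writing $P-P^t=U$ and $\det(tP-P^t)=\det(U)\det\bigl(I-(1-t)U^{-1}P\bigr)$, the order of vanishing at $t=0$ is the \emph{algebraic} multiplicity of the relevant eigenvalue of $U^{-1}P^t$, whereas $\dim\ker P$ is the \emph{geometric} multiplicity, so one must rule out nontrivial Jordan blocks. Putting a basis of $\ker P$ in the first $k$ coordinates shows only that $t^k$ divides the determinant; exactness requires more. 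The clean route is through $S$-equivalence: an elementary enlargement multiplies $\det(tP-P^t)$ by $\pm t$ and raises $\dim\ker P$ by exactly one, and every Seifert matrix of a simple Seifert hypersurface is $S$-equivalent (Trotter, Levine) to a nonsingular one, for which both quantities vanish; hence the identity propagates. For the $2\times2$ case needed in this paper your explicit check closes the gap, but as a proof of the theorem as stated you should either supply the $S$-equivalence argument or restrict the claim.
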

 \begin{cor}
 If $K$ is any simple knot, then the genus of $K$ is equal to $\frac{1}{2} \deg \Delta_K$.
 \end{cor}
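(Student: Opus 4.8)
The plan is to combine Theorem~\ref{alexander poly formula} with two ingredients: an elementary determinantal bound that controls $\deg\Delta_K$ in terms of the genus of an \emph{arbitrary} simple Seifert surface, and a surgery argument producing one surface on which that bound is sharp. Since Theorem~\ref{alexander poly formula} guarantees $\Delta_K$ is the same polynomial regardless of $V$, the two steps pin down $\genus(K)$ from opposite sides.

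First I would prove $\genus(K)\ge \tfrac12\deg\Delta_K$ by pure linear algebra. Fix any simple Seifert surface $V$, let $2g=\rk H_{2a+1}(V;\Z)$, and let $P$ be a Seifert matrix, a $2g\times 2g$ integer matrix. Put $r=\dim\ker P$ and choose a basis whose last $r$ vectors span $\ker P$, so that the last $r$ columns of $P$ vanish. Then in $tP-P^{t}$ only the first $2g-r$ columns carry any $t$, each to degree at most one, so $\deg_t\det(tP-P^{t})\le 2g-r$; dually the last $r$ rows of $tP-P^{t}$ are each divisible by $t$, so $t^{r}\mid \det(tP-P^{t})$. Hence by Theorem~\ref{alexander poly formula} the expression $\Delta_K=t^{-r}\det(tP-P^{t})$ is a genuine polynomial of degree at most $2g-2r\le 2g=2\genus(V)$. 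Taking the minimum over all $V$ yields $\tfrac12\deg\Delta_K\le \genus(K)$.

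For the reverse inequality it suffices to exhibit a single simple Seifert surface $V_0$ whose Seifert matrix $P_0$ is nondegenerate: then $r=0$, the leading coefficient of $\det(tP_0-P_0^{t})$ equals $\det P_0\ne0$, so $\deg\Delta_K=2\genus(V_0)$, and the lower bound just proved forces $V_0$ to have minimal genus. To produce $V_0$ I would start from any simple Seifert surface $V$ and remove its radical by ambient surgery. If $P$ is degenerate, choose a primitive class $x\in\ker P$, i.e. a class with $\lk(y^{+},x)=0$ for all $y$; because $2a+1\ge3$ (here $a\ge1$), general position lets me represent $x$ by an embedded sphere $S^{2a+1}\hookrightarrow\operatorname{int} V$ with trivial normal bundle, the framing being available precisely because the relevant self-linking vanishes. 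Surgering $V$ along this sphere should yield a new compact oriented hypersurface $V'$ with $\partial V'=K$, of genus $g-1$, whose Seifert pairing is that of $V$ with the row and column of $x$ deleted; iterating drives the radical to zero.

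The main obstacle is exactly this last, geometric, step: one must verify that the surgery can be carried out ambiently inside $S^{4a+3}$, that it preserves $\tfrac{n-1}{2}$-connectedness (so $V'$ is again a \emph{simple} Seifert surface and $K$ is unchanged up to isotopy), and that its effect on the Seifert form is the claimed deletion. This is where the high-dimensional hypothesis $a\ge1$ is essential — it is the reason the statement fails for classical $1$-knots, where the genus is genuinely subtle — and it is most efficiently handled by appealing to the surgery and $s$-equivalence results of Levine~\cite{levine-knot_modules} and Trotter~\cite{trotter-s-equivalence} already used above, which ensure that the Seifert form of a simple knot is realized by a nondegenerate matrix after finitely many reductions. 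Once $V_0$ exists, the two inequalities combine to give $\genus(K)=\tfrac12\deg\Delta_K$.
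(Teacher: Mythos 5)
Your argument is correct and is essentially the paper's proof written out in full: the paper likewise deduces the corollary from Theorem~\ref{alexander poly formula} together with the existence of a Seifert surface with nondegenerate Seifert pairing, which it (like you, ultimately) attributes to Levine's classification results rather than reproving the surgery step. The determinantal bookkeeping you supply for the inequality $\deg\Delta_K\le 2\genus(V)$ is a sound expansion of what the paper leaves implicit.
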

\begin{proof}
This follows from Theorem~\ref{alexander poly formula} and the result that any simple knot has a Seifert surface with nondegenerate Seifert pairing (itself a consequence of Theorem 3 and Proposition 1 in \cite{levine-classification}).
\end{proof}
\subsection{Relationship between knots, ideal classes, and quadratic forms}

We now use the classification of simple knots to show that the various forms of Question 1 stated in the introduction are equivalent. 
\begin{thm}[Classification of simple knots]\label{classification}\cite{ levine-classification, levine-knot_modules,trotter-knot_modules, kearton, farber-simple}
The following are in bijection with each other:
\begin{enumerate}[(i)] \item equivalence classes of simple $(4a+1)$-knots of genus $\le g$ 
\item $S$-equivalence classes of $2g\times 2g$ Seifert matrices $P$ \cite{levine-classification}
\item Alexander modules of genus $\le g$ equipped with Blanchfield pairing. \cite{levine-knot_modules, trotter-knot_modules}
\item $R$-equivalence classes of $\Z[z]$-modules with isometric structures \cite{farber-simple}
\end{enumerate}
\end{thm}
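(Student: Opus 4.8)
The plan is to prove the theorem not by a single argument but by assembling a chain of bijections
\[
\text{(i)} \longleftrightarrow \text{(ii)} \longleftrightarrow \text{(iii)} \longleftrightarrow \text{(iv)},
\]
each link of which is supplied by one of the cited papers; the remaining work is to check that the four identifications are mutually compatible and respect the genus bound throughout.

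For (i) $\leftrightarrow$ (ii), I would use the preceding theorem to attach to each simple $(4a+1)$-knot $K$ of genus $\le g$ a simple Seifert hypersurface $V$, and then the Seifert pairing on the rank-$2g$ lattice $H_{2a+1}(V;\Z)$, producing a $2g \times 2g$ matrix $P$. The nonuniqueness of $V$ is precisely what is encoded by $S$-equivalence, the relation generated by congruence $P \mapsto U P U^t$ and by stabilization via elementary enlargements. The substantive statement is that two simple knots are equivalent if and only if their Seifert matrices are $S$-equivalent, which is Levine's classification theorem \cite{levine-classification}; its hard direction realizes an abstract $S$-equivalence of matrices by an ambient isotopy of knots using high-dimensional surgery.

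For (ii) $\leftrightarrow$ (iii), I would pass from $P$ to the Alexander module presented by $tP - P^t$ over $\Z[t,t^{-1}]$, together with its Blanchfield pairing, which can be written explicitly in terms of $P$ and the unimodular matrix $P - P^t$. One verifies that $S$-equivalent matrices yield isometric modules-with-pairing and, conversely, that every Alexander module carrying a Blanchfield pairing of the relevant type is realized by some Seifert matrix; this is the module-theoretic classification of Levine, Trotter, and Kearton \cite{levine-knot_modules, trotter-knot_modules, kearton}. Finally, for (iii) $\leftrightarrow$ (iv) I would invoke Farber's dictionary \cite{farber-simple}, which after a change of variable converts the sesquilinear Blanchfield form into the bilinear datum of a $\Z[z]$-module equipped with an isometric structure, the variable $z$ recording the relevant isometry; functoriality of this dictionary gives the bijection between Blanchfield modules and $R$-equivalence classes.

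The main obstacle is the first link, (i) $\leftrightarrow$ (ii): the passage from a geometric equivalence relation on embedded spheres to the purely algebraic relation of $S$-equivalence. Showing that $S$-equivalent Seifert matrices produce ambient-isotopic knots is a surgery-theoretic realization result that holds only in the high-dimensional range, and much of the bookkeeping lies in confirming that each arrow in the chain preserves the genus bound and matches up the corresponding equivalence relations. Since all four equivalences are already established in the references, the proof ultimately amounts to citing them and checking that the four classifying objects are identified compatibly.
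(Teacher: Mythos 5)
Your proposal is correct and matches the paper's approach: the paper gives no proof of this theorem beyond the citations, treating it as an assembly of the known classification results of Levine, Trotter, Kearton, and Farber, which is exactly the chain (i)$\leftrightarrow$(ii)$\leftrightarrow$(iii)$\leftrightarrow$(iv) you describe, with each link attributed to the correct reference.
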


\begin{rem}
For each of these objects, there is a natural way of defining an Alexander polynomial. The easiest one to define is that for Seifert matrices, where the Alexander polynomial equals $t^{-\dim \ker P} (\det t P - P^t)$, as in Theorem~\ref{alexander poly formula}.  For an Alexander module (with pairing), the Alexander polynomial is defined as a normalized generator of the zeroth Fitting ideal.

These bijections preserve the Alexander polynomial.
\end{rem}

For $g = 1$, we can add two more items to the list, which we now introduce.  
First define \[R_m = \Z  [t, t^{-1}]/\Delta(t) \isom \Z \left[\frac{1}{m}, \frac{1 + \sqrt{1-4m}}{2} \right].\]  (Note that this ring is called $\cO_m$ in the author's Ph.D thesis \cite{miller_thesis}.)

Inspired by Bhargava's defnition in \cite{bhargava-higher_composition_i} we define
\begin{defn}
An {\em oriented ideal class} of $R_m$ is a homothety class of fractional ideals $I$ of $R_m$ equipped with an isomorphism $\phi: \Wedge^2 I \isom \Z[\frac{1}{m}]$ of $\Z[\frac{1}{m}]$-modules.
\end{defn}

Any such $\phi$ can be written in the form \[\phi(\alpha, \beta) = \tr\left(\frac{\alpha \beta}{\kappa \sqrt{1-4m}}\right)\] for a unique $\kappa \in \Z[\frac{1}{m}]$.  Such a $\phi$ maps $\Wedge^2 I$ isomorphically to $\Z[\frac{1}{m}]$ if and only if $\kappa$ is a generator of the fractional ideal $N I$ of $\Z[\frac{1}{m}]$.

Hence we can also describe oriented ideal classes of $R_m$ as equivalence classes $[I, \kappa]$ of pairs where $I$ is a fractional ideal of $R_m$ and $\kappa \in \Z[\frac{1}{m}]$ is a generator of $N I$, modulo the equivalence relation $(I, \kappa) \sim (\alpha I, N \alpha \kappa)$ for every $\alpha$ in $\Frac(R_m) = \Q(\sqrt{1-4m})$.  

In this paper we will write our oriented ideal classes as $[I, \kappa]$.  We will use the same notation for imaginary quadratic rings.

Not all oriented ideal classes of $R_m$ come from invertible ideals, but the ones that do form a group, which we denote $\Cl^+(R_m)$, and call the {\em oriented class group} of $\Cl^+(R_m)$.  The {\em oriented class group} is also sometimes called the ``narrow class group'', as in Bhargava \cite{bhargava-higher_composition_i}.  However, it is more common to use ``narrow class group'' to refer to the quotient of the group of fractional ideals by the totally positive principal fractional ideals.

These two notions of oriented and narrow class group are closely related.  For instance, if $\cO$ is a quadratic order, we can define the oriented class group $\Cl^+(\cO)$ analogously (replacing $\Z[\frac{1}{m}]$ with $\Z$).  When $\cO$ is a real quadratic order then $\Cl^+(\cO)$ is then precisely the narrow class group of $\cO$, and when $\cO$ is imaginary quadratic, the natural map $\Cl^+(\cO) \to \Cl(\cO)$ is a surjective 2-1 map.  However, for our quadratic rings $R_m$, where the base ring $\Z[\frac{1}{m}]$ has a potentially large unit group, in general the oriented class group $\Cl^+(R_m)$ will be much larger than the narrow class group of $R_m$, as one ideal class may have many inequivalent orientations.

\begin{rem}
In her Ph.D. thesis \cite{miller_thesis} the author defined conjugate self-balanced modules and ideal classes, which generalize the definition of oriented ideal classes above to the case of $g > 1$.  They can be thought of as relative ideal classes for a quadratic extension of rings.
\end{rem}

\begin{thm}
When $g =1$ we can add the following two items to the list in Theorem~\ref{classification}:

\begin{enumerate}[(v)]

\item[(v)] pairs $(m, [I, \kappa])$ where $m$ is an integer and $[I, \kappa]$ is an {\em oriented ideal class} of the ring $R_m = \Z[\frac{1}{m}, \frac{1 + \sqrt{1-4m}}{2}]$.

\item[(vi)] pairs $(m, [Q])$ where $m$ is an integer and $[Q]$ is an $\SL_2[\Z[\frac{1}{m}]]$-equivalence class of binary quadratic forms over $\Z[\frac{1}{m}]$ with $\Disc Q = 1-4m$.

In both cases, the corresponding Alexander polynomial is \[\Delta_m = m t^2 + (1-2m) t + m.\]
\end{enumerate}

\end{thm}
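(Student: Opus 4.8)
The plan is to splice the two new items into Theorem~\ref{classification} by exhibiting explicit bijections routed through the oriented ideal classes, establishing (iii)/(iv)$\,\leftrightarrow\,$(v) and (v)$\,\leftrightarrow\,$(vi); the compatibility with the Alexander polynomial will be checked along the way.

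First I would pin down the algebra of $R_m$. A one-line computation in $\Z[t,t^{-1}]/\Delta_m$ gives $m(2 - t - t^{-1}) = 1$, so $m$ is a unit and $R_m = \Z[\tfrac1m][\omega]$ with $\omega = \tfrac{1+\sqrt{1-4m}}{2}$ a root of $x^2 - x + m$; the involution $t \mapsto t^{-1}$ coming from the Blanchfield/Seifert side matches the standard conjugation $\omega \mapsto 1-\omega$ on $R_m$, for which $\Frac(R_m) = \Q(\sqrt{1-4m})$ and $\disc R_m = 1-4m$. For (iii)/(iv)$\,\leftrightarrow\,$(v): when $g = 1$ the Alexander module $A$ (equivalently Farber's $\Z[z]$-module with isometric structure) is killed by $\Delta_m$ and is of rank one over $R_m \tensor \Q = \Q(\sqrt{1-4m})$, hence is a torsion-free rank-one $R_m$-module, i.e.\ up to isomorphism a fractional ideal $I$. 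The nonsingular Blanchfield pairing is a sesquilinear form on the rank-one module $I$; via the trace form and the different $\sqrt{1-4m}$ it is equivalent data to an orientation $\phi\colon \Wedge^2 I \isom \Z[\tfrac1m]$ recorded by a unique $\kappa$ as in the displayed formula above, and nonsingularity is precisely the condition that $\kappa$ generate $N I$. Thus $A$ with its pairing yields the oriented ideal class $[I,\kappa]$, module isomorphisms inducing the homothety relation $(I,\kappa) \sim (\alpha I, N\alpha\,\kappa)$; the construction is manifestly reversible.

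For (v)$\,\leftrightarrow\,$(vi) I would run the binary-quadratic-form dictionary of Bhargava~\cite{bhargava-higher_composition_i} over the base $\Z[\tfrac1m]$. Given $[I,\kappa]$, choose a $\Z[\tfrac1m]$-basis $(\alpha,\beta)$ of $I$ with $\phi(\alpha,\beta) = 1$ and set
\[ Q(x,y) = \frac{N(x\alpha + y\beta)}{\kappa} \in \Z[\tfrac1m][x,y], \]
which has $\Disc Q = 1-4m$; changing the oriented basis acts through $\SL_2(\Z[\tfrac1m])$ while rescaling $I$ leaves $Q$ fixed, so oriented ideal classes correspond bijectively to $\SL_2(\Z[\tfrac1m])$-classes of forms of discriminant $1-4m$, the inverse sending $Ax^2 + Bxy + Cy^2$ to $\big(\langle A, \tfrac{-B+\sqrt{1-4m}}{2}\rangle,\, A\big)$. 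Since discriminant $1-4m$ matches the Alexander polynomial $\Delta_m$, both items attach to the list compatibly.

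The main obstacle is the middle step: identifying the Blanchfield pairing with the orientation datum, and in particular explaining why the base ring must be $\Z[\tfrac1m]$ rather than $\Z$. This reflects the fact that nonsingularity of the Blanchfield pairing corresponds to unimodularity of the intersection form, $\det(P - P^t) = 1$, whereas the symmetric part satisfies $\det(P + P^t) = 4m - 1$, so the pairing only becomes a perfect pairing of $\Z[\tfrac1m]$-modules after inverting $m$. A secondary technical point is that $R_m$ is frequently a non-maximal order (when $1-4m$ is not fundamental), so $I$ need not be invertible; the bijection must accordingly be phrased for all oriented ideal classes, not merely those in the oriented class group $\Cl^+(R_m)$.
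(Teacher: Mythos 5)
Your proposal is correct and follows essentially the same route as the paper: identify the Alexander module with a fractional ideal of $R_m$ using squarefreeness of $\Delta_m$, convert the Blanchfield pairing into an orientation via a trace map (the paper uses Trotter's trace $[f]\mapsto f'(0)$ where you invoke the field trace and the different, which amounts to the same thing), and then pass between oriented ideal classes and binary quadratic forms by the standard Bhargava-style dictionary over $\Z[\frac{1}{m}]$. Your closing observations about why $m$ must be inverted and about non-invertible ideals match the paper's discussion as well.
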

\begin{proof}

First of all, the bijection between (v) and (vi) is a generalization of the standard bijection between binary quadratic forms and ideal classes in quadratic rings.  Given an oriented ideal class $(I, \phi)$ of $R_m$, choose any $\Z[\frac{1}{m}]$ basis $u_1, u_2$ of $I$ with $\phi(u_1 \wedge u_2) = 1$.  The function $\phi((\sqrt{1-4m}) a \wedge b)$ is a symmetric bilinear form on the rank $2$ $\Z[\frac{1}{m}]$-module $I$.  If we write $\phi$ out in the basis $u_1, u_2$ we obtain a binary quadratic form $Q$ of discriminant $\sqrt{1-4m}$.

To finish, it is easiest to either biject Alexander modules with oriented ideal classes, or Seifert matrices with binary quadratic forms.  The former bijection is easier to prove, the latter easier to describe.  We prove the former:

If $M$ is an Alexander module with Alexander polynomial $\Delta_m$, we can view $M$ as a module over the quotient ring $R_m = \Z[t, t^{-1}]/\Delta_m$.  Because $\Delta_m$ is squarefree, we have $M \otimes_\Z \Q \isom R_m \otimes_\Z \Q$ as $ R_m \otimes_\Z \Q = \Q[t, t^{-1}]/\Delta(t)$-modules.  Hence $M$ is isomorphic as $R_m$-module to some fractional ideal $I$ of $R_m$.  Choose such an $I$ and an isomorphism $\phi: I \to M$.

To put an orientation on $I$, we use the Blanchfield pairing.  The isomorphism $\phi$ lets us transfer the Blanchfield pairing on $M$ to a $R_m$-hermitian perfect pairing 
\[
\langle, \rangle : I \times I \to \frac{1}{\Delta(t)}\Z[t, t^{-1}] / \Z[t, t^{-1}].
\]
To obtain an orientation, we compose with the map $T: \frac{1}{\Delta(t)}\Z[t, t^{-1}]/ \Z[t, t^{-1}] \to \Q$ sending $[f] \mapsto f'(0)$.  The map $T$ is a special case of ``Trotter's trace function'' in knot theory.  It's determined $\Z[\frac{1}{m}]$-linearly from the values  $T(\frac{1}{\Delta(t)}) = 0$, $T(\frac{t}{\Delta(t)}) = \frac{1}{m}$.

By standard arguments, the pairing $T(\langle a, b \rangle): I \times I \to \Z[\frac{1}{m}]$ is skew-symmetric with determinant a unit in $\Z[\frac{1}{m}]$.   Moreover we can recover the pairing $\langle, \rangle$ from the pairing $T(\langle a, b \rangle)$.
\end{proof}

We can now apply the formulas for the Alexander module and Blanchfield pairing in terms of the Seifert matrix  original given by Levine in \cite{levine-knot_modules} and reproved by Friedl and Powell in \cite{friedl-powell-blanchfield}.  When one works out the details, it turns out that the composite map from Seifert matrices to binary quadratic forms sends a Seifert matrix $P$ to the quadratic form with matrix $\frac{P + P^T}{2}$. (This is not an integer matrix, but still gives an integer quadratic form.)

We then obtain the following corollary.
\begin{cor}
Two $2 \times 2$ Seifert matrices $P_1$ and $P_2$ with the same Alexander polynomial $\Delta_m$ (equivalently, with the same determinant $m$) are $S$-equivalent if and only if there exists $X\in \SL_2(\Z[\frac{1}{m}])$ with $P_1 = X P_2 X^T$. 
\end{cor}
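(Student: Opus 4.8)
The plan is to deduce the corollary directly from the composite bijection just established, which identifies $S$-equivalence classes of $2\times 2$ Seifert matrices with $\SL_2(\Z[\frac{1}{m}])$-equivalence classes of binary quadratic forms of discriminant $1-4m$, preserving $m = \det P$, and which sends a Seifert matrix $P$ to the symmetric form $\frac{P+P^T}{2}$. Throughout I would write $P = S + A$ for the decomposition into symmetric part $S = \frac{1}{2}(P+P^T)$ and skew-symmetric part $A = \frac{1}{2}(P-P^T)$; here $S$ is exactly the matrix of the quadratic form attached to $P$, while the constraint $\det(P-P^T)=1$ forces $A$ to be a sign times the standard symplectic (intersection) form $\frac{1}{2}J$, where $J = \begin{pmatrix} 0 & 1 \\ -1 & 0 \end{pmatrix}$.

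I would treat the two implications separately. For the easy direction, suppose $P_1 = XP_2X^T$ for some $X \in \SL_2(\Z[\frac{1}{m}])$. Taking determinants gives $\det P_1 = \det P_2$, so the two matrices share the Alexander polynomial $\Delta_m$. Taking symmetric parts and using that transpose-congruence commutes with symmetrization, $\frac{1}{2}(P_1+P_1^T) = X\cdot\frac{1}{2}(P_2+P_2^T)\cdot X^T$, so the associated quadratic forms are $\SL_2(\Z[\frac{1}{m}])$-equivalent; by the bijection, $P_1$ and $P_2$ are $S$-equivalent.

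For the converse, suppose $P_1$ and $P_2$ are $S$-equivalent (necessarily with the same $m$, since the bijections preserve the Alexander polynomial). The bijection then produces $X \in \SL_2(\Z[\frac{1}{m}])$ realizing the equivalence of the symmetric parts, i.e. $S_1 = XS_2X^T$. The key observation is that $X$ automatically fixes the skew part: any skew-symmetric $2\times 2$ matrix is a scalar multiple of $J$, and $XJX^T = (\det X)J = J$, so $XA_2X^T = A_2$. Hence $XP_2X^T = XS_2X^T + XA_2X^T = S_1 + A_2$, and provided $A_1 = A_2$ this is exactly $S_1 + A_1 = P_1$, as desired.

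The one genuinely delicate point — and what I expect to be the main obstacle — is this final matching of the skew parts. In rank $2$ the constraint $\det(P-P^T)=1$ pins $A$ down only up to sign, $A = \pm\frac{1}{2}J$, and no element of $\SL_2$ can change that sign. One must therefore know that $A_1$ and $A_2$ carry the \emph{same} sign, which is precisely the place where the normalization that the skew part is the fixed, positively oriented intersection pairing is essential: with that convention every relevant Seifert matrix has skew part exactly $\frac{1}{2}J$, and the upgrade from ``symmetric parts $\SL_2(\Z[\frac{1}{m}])$-congruent'' to ``full matrices $\SL_2(\Z[\frac{1}{m}])$-congruent'' goes through. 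This is also what forces the group to be $\SL_2$ rather than $\GL_2$: a congruence of determinant $-1$ would flip the sign of $A$ and send the quadratic form to its opposite class, which is generally inequivalent. I would therefore make the orientation normalization explicit at the outset of the argument.
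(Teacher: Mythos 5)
Your argument is correct and follows essentially the same route as the paper: the corollary is read off from the composite bijection between $S$-equivalence classes of $2\times 2$ Seifert matrices and $\SL_2(\Z[\frac{1}{m}])$-classes of binary quadratic forms via $P \mapsto \frac{P+P^T}{2}$. The paper itself simply cites Trotter (4.15) for the underlying details, whereas you work them out directly; your explicit matching of the skew-symmetric parts via $XJX^T = (\det X)J$, together with the observation that the intersection-form normalization pins down the sign of the skew part and forces the group to be $\SL_2$ rather than $\GL_2$, is precisely the bookkeeping that citation is covering.
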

This is a special case of 4.15 in Trotter \cite{trotter-s-equivalence}.  For larger Seifert matrices Trotter also shows that $S$-equivalence implies $\Sp_{2g}(\Z[\frac{1}{m}])$-equivalence, but the converse is not generally the case.

Another corollary (which can also be proved directly):
\begin{cor}\label{part1}
Any binary quadratic form over $\Z[\frac{1}{m}]$ of discriminant $1-4m$ is $\SL_2(\Z[\frac{1}{m}])$-equivalent to a form defined over $\Z$.
\end{cor}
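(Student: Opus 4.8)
The plan is to reduce the problem to a local one at the primes dividing $m$, exploiting the crucial coincidence that the discriminant $1-4m$ is coprime to $m$. Write $Q = (a,b,c)$ with $a,b,c \in \Z[\frac{1}{m}]$ and $\Disc Q = b^2 - 4ac = 1-4m$. Since the ring $\Z[\frac{1}{m}]$ has denominators supported only on the primes dividing $m$, the coefficients of $Q$ already lie in $\Z_p$ for every $p \nmid m$, and any element of $\SL_2(\Z[\frac{1}{m}])$ preserves this $p$-integrality. Thus the only obstruction to $Q$ being integral is at the finite set of primes $p \mid m$, and it suffices to produce a single $g \in \SL_2(\Z[\frac{1}{m}])$ that clears denominators simultaneously at all such $p$.

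First I would analyze $Q$ over each completion $\Q_p$ with $p \mid m$. Here the key point is that $1-4m$ is a $p$-adic unit (because $\gcd(1-4m,m)=1$) which is moreover a square in $\Z_p^\times$: for odd $p \mid m$ one has $1-4m \equiv 1 \pmod p$, and for $p = 2 \mid m$ one has $1-4m \equiv 1 \pmod 8$, so Hensel's lemma applies in each case. A nondegenerate binary quadratic form over a field whose discriminant is a square is isotropic, hence split, and the split form is unique up to isometry; combined with the fact that forms of a fixed discriminant inside a single isometry class form one $\SL_2$-orbit, this shows that over $\Q_p$ there is exactly one $\SL_2(\Q_p)$-orbit of forms of discriminant $1-4m$. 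In particular $Q$ is $\SL_2(\Q_p)$-equivalent to the principal integral form $Q_0 = x^2 + xy + m y^2$, which has discriminant $1-4m$ and is unimodular over $\Z_p$. This produces, for each $p \mid m$, an element $g_p \in \SL_2(\Q_p)$ with $g_p \cdot Q = Q_0$.

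It remains to patch the $g_p$ into one global element, and this globalization is the step that requires the most care. I would invoke strong approximation for $\SL_2$: since $\Z[\frac{1}{m}]$ is dense in $\prod_{p \mid m}\Q_p$ and $\SL_2(\Z[\frac{1}{m}])$ is generated by the elementary matrices $\left(\begin{smallmatrix}1 & x\\ 0 & 1\end{smallmatrix}\right)$ and $\left(\begin{smallmatrix}1 & 0\\ x & 1\end{smallmatrix}\right)$ with $x \in \Z[\frac{1}{m}]$, the image of $\SL_2(\Z[\frac{1}{m}])$ is dense in $\prod_{p \mid m}\SL_2(\Q_p)$. Choosing $g \in \SL_2(\Z[\frac{1}{m}])$ sufficiently close $p$-adically to each $g_p$ makes $g \cdot Q$ as close as we like to the $\Z_p$-integral form $Q_0$ at every $p \mid m$; since $\Z_p$-integrality is an open condition, $g \cdot Q$ is $\Z_p$-integral for all $p \mid m$. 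As noted above it is automatically $\Z_p$-integral for $p \nmid m$, so its coefficients lie in $\Z[\frac{1}{m}] \cap \bigcap_{p \mid m}\Z_p = \Z$, and $g \cdot Q$ is the desired integral form. Conceptually the whole argument is driven by the single fact that $\gcd(1-4m,m)=1$, which forces the form to be split at the bad primes; an alternative, more structural proof would phrase this as the surjectivity of the localization map $\Cl^+(\cO) \to \Cl^+(R_m)$ for $\cO = \Z[\frac{1+\sqrt{1-4m}}{2}]$, at the cost of some bookkeeping with orientations.
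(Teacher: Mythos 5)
Your proof is correct, but it takes a genuinely different route from the paper. The paper obtains this corollary as a byproduct of its classification machinery: every $\SL_2(\Z[\frac{1}{m}])$-class of forms of discriminant $1-4m$ arises from an integral $2\times 2$ Seifert matrix $P$ via $P \mapsto \frac{P+P^T}{2}$, which visibly gives an integral form, and the statement is then recast as surjectivity of $\iota_*\colon \Cl^+(\Z[\gamma_m]) \to \Cl^+(R_m)$ --- precisely the ``more structural'' alternative you mention at the end. You instead supply the direct argument that the paper alludes to (``can also be proved directly'') but omits: since $\gcd(1-4m,m)=1$, the discriminant is a unit square in $\Z_p$ for each $p \mid m$ (by Hensel, including $p=2$, where $1-4m \equiv 1 \pmod 8$), so the form is split over $\Q_p$ and lies in the unique $\SL_2(\Q_p)$-orbit containing the integral principal form $x^2+xy+my^2$; density of $\SL_2(\Z[\frac{1}{m}])$ in $\prod_{p\mid m}\SL_2(\Q_p)$ then patches the local solutions into a single global $g$, and openness of $\Z_p$-integrality finishes. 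The steps check out: in particular, nondegenerate isotropic binary forms of equal discriminant do form a single $\SL_2$-orbit over a field, because every nondegenerate binary form admits an improper isometry; and your elementary-matrix density argument is legitimate since $\Z[\frac{1}{m}]$, as a localization of $\Z$, is Euclidean, so $\SL_2(\Z[\frac{1}{m}])$ is generated by elementary matrices (one could also just quote strong approximation for the simply connected group $\SL_2$). What your approach buys is complete independence from the knot-theoretic classification and the ideal-class dictionary, and it makes transparent that the only input is the coprimality of $1-4m$ and $m$; what the paper's approach buys is that the statement comes for free from structures it needs anyway, along with the explicit description of $\ker\iota_*$ used later. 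One cosmetic point: you should note $m \ne 0$, so that $1-4m \ne 0$ and the form is nondegenerate, as the paper assumes throughout.
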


\subsection{The map from oriented ideal classes of $\Z[\gamma_m])$ to oriented ideal classes of $R_m$}

Corollary~\ref{part1} can also be interpreted as a statement about oriented class groups.  Let $\gamma_m = \frac{1 + \sqrt{1-4m}}{2}$, so that $\Z[\gamma_m]$ is the ring of integers in $\Q(\sqrt{1-4m})$.

The inclusion $\iota: \Z[\gamma_m] \into R_m$ induces a map $\iota_*: \Cl^+(\Z[\gamma_m]) \to \Cl^+(R_m)$.  More generally, if $(I, \kappa)$ is a (possibly non-invertible) ideal class of $\Z[\gamma_m]$, then we can map it to the ideal class $(I R_m, \kappa)$ of $R_m$.  In this language, Corollary~\ref{part1} becomes:
\begin{cor}\label{cor: surjective}
The map $\iota_*: \Cl^+(\Z[\gamma_m]) \to \Cl^+(R_m)$ is surjective.
\end{cor}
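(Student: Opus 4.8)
The plan is to deduce surjectivity directly from Corollary~\ref{part1} by transporting the map $\iota_*$ through the dictionary between oriented ideal classes and binary quadratic forms. Recall from the bijection of item (v) with item (vi) that every oriented ideal class $[I,\kappa]$ of $R_m$ corresponds to an $\SL_2(\Z[\frac{1}{m}])$-equivalence class of binary quadratic forms over $\Z[\frac{1}{m}]$ of discriminant $1-4m$, the classes coming from invertible ideals being exactly $\Cl^+(R_m)$. The identical construction, carried out over $\Z$ rather than $\Z[\frac{1}{m}]$, gives the classical Gauss/Bhargava bijection between oriented ideal classes of the order $\Z[\gamma_m]$ of discriminant $1-4m$ and $\SL_2(\Z)$-equivalence classes of integral binary quadratic forms of discriminant $1-4m$. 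First I would make both bijections explicit on the level of an oriented basis: given $[J,\lambda]$ with $\Z$-basis $u_1,u_2$ of $J$ normalized so that the orientation sends $u_1 \wedge u_2$ to $\lambda$, the associated form is read off from $\lambda$ and multiplication by $\sqrt{1-4m}$, by precisely the same formula in both settings.

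The heart of the argument is then a commuting square. I would check that $\iota_*$, which sends $[J,\lambda]$ to $[JR_m,\lambda]$, corresponds under these two bijections to the base-change map on forms, sending an integral form of discriminant $1-4m$ viewed up to $\SL_2(\Z)$ to the same form viewed up to $\SL_2(\Z[\frac{1}{m}])$. This compatibility is essentially formal: the chosen $\Z$-basis $u_1,u_2$ of $J$ is also a $\Z[\frac{1}{m}]$-basis of $JR_m$, the orientation element $\lambda \in \Z \subset \Z[\frac{1}{m}]$ still generates $N(JR_m)$, and the form-defining formula is unchanged, so the two routes around the square agree. Granting this, surjectivity of $\iota_*$ is exactly the statement that every $\SL_2(\Z[\frac{1}{m}])$-class of forms of discriminant $1-4m$ contains an integral representative---which is Corollary~\ref{part1}.

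The step I expect to require the most care is reconciling invertibility, since $\Z[\gamma_m]$ need not be the maximal order when $1-4m$ fails to be squarefree. Concretely, given an invertible class $[I,\kappa] \in \Cl^+(R_m)$, Corollary~\ref{part1} produces an integral form, hence an oriented ideal class $[J,\lambda]$ of $\Z[\gamma_m]$ with $\iota_*[J,\lambda] = [I,\kappa]$; but a priori $J$ might fail to be invertible at primes dividing the conductor, in which case $[J,\lambda] \notin \Cl^+(\Z[\gamma_m])$. I would resolve this using that $R_m = \Z[\gamma_m][\frac{1}{m}]$ is a localization of $\Z[\gamma_m]$, so that the induced map $\Pic(\Z[\gamma_m]) \to \Pic(R_m)$ is surjective: the invertibility of $I = JR_m$ in $R_m$ can be lifted to an invertible ideal of $\Z[\gamma_m]$ lying in the same $R_m$-class, and I would then transport the orientation $\lambda$ accordingly so that the lifted pair still maps to $[I,\kappa]$. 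Verifying that the orientation survives this replacement, and that it continues to generate the relevant norm ideal, is the remaining bookkeeping; once done, the lifted class lies in $\Cl^+(\Z[\gamma_m])$ and surjects onto $[I,\kappa]$.
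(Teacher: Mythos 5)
Your proposal is correct and takes essentially the same approach as the paper: the paper gives no separate proof, simply declaring that Corollary~\ref{part1} \emph{becomes} this statement once translated through the dictionary between oriented ideal classes and binary quadratic forms, which is exactly the commuting square you spell out. Your invertibility concern is legitimate but can be dispatched more directly than by lifting through $\Pic$: since $\gcd(1-4m,m)=1$, the content of the integral representative has square dividing $1-4m$ while primitivity over $\Z[\frac{1}{m}]$ forces that content to divide a power of $m$, so the integral form is automatically primitive over $\Z$ and hence already corresponds to an invertible oriented ideal class of $\Z[\gamma_m]$.
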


The kernel of $\iota_*$ can also be described explicitly: 
\begin{prop}\label{describe kernel}
The kernel $\ker \iota_*$ is generated by the classes \[
\mathfrak{g}_p = [(p, \gamma_m), p][(p, 1-\gamma_m), p]^{-1}=[(p, \gamma_m), p]^2
\]
where $p$ runs through the set of all prime factors of $m$.
\end{prop}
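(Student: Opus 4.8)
The plan is to make the localization sequence relating $\Cl^+(\Z[\gamma_m])$ and $\Cl^+(R_m)$ completely explicit while carefully tracking orientations. First I would record how the primes $p \mid m$ factor in $\Z[\gamma_m]$. Since $\gamma_m$ satisfies $x^2 - x + m = 0$ and $p \mid m$, the discriminant $1 - 4m$ is a unit mod $p$ (for odd $p$ trivially, and for $p = 2$ because $1 - 4m \equiv 1 \pmod 8$ when $2 \mid m$), so $\Z[\gamma_m]$ is maximal at $p$ and $x^2 - x + m \equiv x(x-1) \pmod p$ has distinct roots. Hence $p$ splits as $p\,\Z[\gamma_m] = \mathfrak p_p \bar{\mathfrak p}_p$ with $\mathfrak p_p = (p, \gamma_m)$ and $\bar{\mathfrak p}_p = (p, 1-\gamma_m)$, each of norm $p$; a direct computation using $\gamma_m(1-\gamma_m) = m$ confirms $\mathfrak p_p \bar{\mathfrak p}_p = (p)$. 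I would then dispatch the asserted identity $\mathfrak g_p = [\mathfrak p_p, p]^2$: multiplying oriented ideals gives $[\mathfrak p_p, p][\bar{\mathfrak p}_p, p] = [(p), p^2]$, which is the identity since the defining relation yields $(\Z[\gamma_m], 1) \sim (p\,\Z[\gamma_m], N(p)) = ((p), p^2)$ with $N(p) = p^2$; hence $[\bar{\mathfrak p}_p, p] = [\mathfrak p_p, p]^{-1}$ and therefore $\mathfrak g_p = [\mathfrak p_p, p][\bar{\mathfrak p}_p, p]^{-1} = [\mathfrak p_p, p]^2$.

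The containment $\langle \mathfrak g_p \rangle \subseteq \ker \iota_*$ is immediate: since $\mathfrak p_p$ and $\bar{\mathfrak p}_p$ both extend to the unit ideal of $R_m$, one has $\iota_*[\mathfrak p_p, p] = [R_m, p]$ and likewise for $\bar{\mathfrak p}_p$, so $\iota_* \mathfrak g_p = [R_m, p][R_m, p]^{-1}$ is trivial. For the reverse containment I would take an arbitrary class $[I, \kappa] \in \ker \iota_*$ and unwind what triviality in $\Cl^+(R_m)$ means: there exists $\beta \in K^\times$, where $K = \Q(\sqrt{1-4m})$, with $I R_m = \beta R_m$ and $\kappa = N\beta$ exactly. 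The first condition says $J := \beta^{-1} I$ is a fractional $\Z[\gamma_m]$-ideal supported only on the primes over $m$, say $J = \prod_{p \mid m} \mathfrak p_p^{a_p} \bar{\mathfrak p}_p^{b_p}$, and the relation $[I, \kappa] = [\beta J, N\beta] = [J, 1]$ then reduces the class to this ideal with trivial orientation.

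The crux is that the orientation condition forces the exponents to cancel. Because $\kappa$ is by definition a generator of $NI = N\beta \cdot NJ$ and simultaneously $\kappa = N\beta$, the fractional $\Z$-ideal $NJ = \prod_p p^{a_p + b_p}$ must be the unit ideal, i.e.\ $a_p + b_p = 0$ for every $p \mid m$. Thus $J = \prod_p (\mathfrak p_p \bar{\mathfrak p}_p^{-1})^{a_p}$ and, using $[\mathfrak p_p \bar{\mathfrak p}_p^{-1}, 1] = [\mathfrak p_p, p][\bar{\mathfrak p}_p, p]^{-1} = \mathfrak g_p$, we obtain $[I, \kappa] = [J, 1] = \prod_p \mathfrak g_p^{a_p}$, completing the proof.

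I expect the main obstacle to be precisely this orientation bookkeeping in the last step --- seeing why the kernel is generated by the balanced classes $[\mathfrak p_p, p]^2$ rather than by the individual classes $[\mathfrak p_p, p]$. In contrast with the ordinary class group, where the kernel of localization is generated by the single primes $[\mathfrak p_p]$ (with $[\bar{\mathfrak p}_p] = [\mathfrak p_p]^{-1}$), the class $[\mathfrak p_p, p]$ on its own does \emph{not} lie in $\ker \iota_*$: its image $[R_m, p]$ is nontrivial unless $p$ is a norm from $R_m^\times$. Only the combination whose orientation cancels --- equivalently, whose ideal part has norm $(1)$ --- survives, and the norm computation above is exactly what pins this down.
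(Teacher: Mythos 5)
Your proof is correct and follows essentially the same route as the paper's: rescale a kernel class so that $I R_m = R_m$ and $\kappa = 1$, factor the resulting ideal over the (split, invertible) primes above $m$, and use the norm condition $NJ = (1)$ to force $a_p + b_p = 0$. You supply more detail than the paper does (the splitting of $p$, the identity $\mathfrak{g}_p = [\mathfrak{p}_p, p]^2$, and the forward containment), but the core argument is identical.
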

\begin{proof}
First, observe that $R_m = \Z[\gamma_m, \frac{1}{m}]$, and that $m$ factors in $R_m$ as 
\[
m = \prod_{p \mid m} (p, \gamma_m)(p, 1 - \gamma_m).
\]

Let $[I, \kappa]$ be an arbitrary element of $\ker \iota_*$; since we are in the kernel we can rescale so that $I \cdot R_m = R_m$ and $\kappa =1$.  Then $I$ is a fractional ideal of $\Z[\gamma_m]$ which becomes trivial when we invert the element $m$, so we can factor $I$ as 
\[
I =  \prod_{p \mid m}(p, \gamma_m)^{a_p}(p, 1 - \gamma_m)^{b_p}.
\]
In order to have $N I = (1)$ we must have $a_p = -b_p$ for all $p$.  The result follows.
\end{proof}
Note that not all oriented ideal classes of $\Z[\gamma_m]$ are invertible.  However:
\begin{prop}\label{kernel}
Two (possibly non-invertible) oriented ideal classes $[I, \kappa]$ and $[I', \kappa']$ of $\Z[\gamma_m]$ become equivalent in $R_m$ if and only if there exists $[J, \lambda] \in \ker \iota_*$ with \[
[I, \kappa] = [J, \lambda][I', \kappa'].
\]
\end{prop}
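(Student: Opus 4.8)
The plan is to interpret the statement as saying that the fibers of $\iota_*$, regarded as a map on the full set of (possibly non-invertible) oriented ideal classes of $\Z[\gamma_m]$, coincide with the orbits of the multiplication action of the subgroup $\ker\iota_* \subseteq \Cl^+(\Z[\gamma_m])$. The ``if'' direction is then immediate from the fact that $\iota_*$ respects multiplication: if $[I,\kappa] = [J,\lambda][I',\kappa']$ with $[J,\lambda]\in\ker\iota_*$, then $\iota_*[I,\kappa] = \iota_*[J,\lambda]\cdot\iota_*[I',\kappa'] = \iota_*[I',\kappa']$, since $\iota_*[J,\lambda]$ is trivial. So the whole content is the converse, which I sketch below.

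Write $K = \Q(\sqrt{1-4m})$ and suppose $\iota_*[I,\kappa] = \iota_*[I',\kappa']$, so there is $\alpha\in K^\times$ with $IR_m = \alpha I'R_m$ and $\kappa = (N\alpha)\kappa'$. Replacing the representative $(I,\kappa)$ by the homothetic pair $(\alpha^{-1}I, N(\alpha^{-1})\kappa) = (\alpha^{-1}I,\kappa')$, which represents the same class $[I,\kappa]$, I may assume from the outset that $IR_m = I'R_m$ and $\kappa = \kappa'$ as elements of $\Q^\times$. In particular $NI = NI' = (\kappa)$ as fractional $\Z$-ideals.

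Next I would localize to pin down where $I$ and $I'$ can differ. For a rational prime $q\nmid m$, tensoring with $\Z_{(q)}$ undoes the inversion of $m$, so $IR_m = I'R_m$ forces $I\otimes\Z_{(q)} = I'\otimes\Z_{(q)}$; thus $I$ and $I'$ agree away from the primes dividing $m$. For $p\mid m$ I would note that $p$ cannot divide the conductor $\f$ of $\Z[\gamma_m]$: since $p\mid m$ we have $1-4m\equiv 1\pmod p$, so $p\nmid\disc(\Z[\gamma_m]) = 1-4m$, whence $\Z[\gamma_m]$ is maximal and unramified at $p$; moreover the minimal polynomial $t^2 - t + m$ of $\gamma_m$ factors as a product of distinct linear factors mod $p$, so $p$ splits as $\p\bar\p = (p)$ with $\p = (p,\gamma_m)$ and $\bar\p = (p, 1-\gamma_m)$. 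Hence $\Z[\gamma_m]\otimes\Z_{(p)}$ is a semilocal principal ideal domain in which every fractional ideal has the form $\p^{a}\bar\p^{b}$, and I can record the discrepancy by integers $a_p, b_p$ with $(I\otimes\Z_{(p)}) = \p^{a_p}\bar\p^{b_p}(I'\otimes\Z_{(p)})$. Setting $J = \prod_{p\mid m}\p^{a_p}\bar\p^{b_p}$, an invertible fractional ideal, the localizations of $JI'$ agree with those of $I$ at every prime, so $I = JI'$.

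The last step is the orientation bookkeeping, and I expect this to be the main obstacle; it is precisely here that Proposition~\ref{describe kernel} is used. Taking norms in $I = JI'$ gives $NI = (NJ)\,NI'$, and since $NI = NI'$ I conclude $NJ = (1)$, that is $a_p + b_p = 0$ for every $p\mid m$. Therefore $J = \prod_{p\mid m}(\p\bar\p^{-1})^{a_p}$, its norm ideal is generated by $1$, and $[J,1] = \prod_{p\mid m}\mathfrak{g}_p^{a_p}$ lies in $\ker\iota_*$ by Proposition~\ref{describe kernel} (recall $\mathfrak{g}_p = [(p,\gamma_m),p][(p,1-\gamma_m),p]^{-1} = [\p\bar\p^{-1}, 1]$). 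Finally $[J,1][I',\kappa'] = [JI',\kappa'] = [I,\kappa]$, as required. The subtlety worth emphasizing is that a naive correcting ideal could carry a nontrivial norm and so fail to land in $\ker\iota_*$; it is exactly the matching of the orientations $\kappa = \kappa'$, achieved by the initial rescaling, that forces $NJ = (1)$ and places $[J,1]$ in the kernel.
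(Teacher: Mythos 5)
Your proof is correct and follows essentially the same route as the paper: reduce to $\kappa = \kappa'$ by a homothety, localize at the primes dividing $m$ (which split and avoid the conductor), and correct $I'$ by $J = \prod_p \mathfrak{g}_p^{a_p}$. The paper's version is terser --- it sets $a_p = v_{\p}(I) - v_{\p}(I')$ and asserts the local check --- whereas you additionally make explicit why the exponents at $\p$ and $\bar{\p}$ must be opposite (via $NI = NI'$), a detail the paper leaves implicit in its reduction to $\kappa = \kappa'$.
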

\begin{proof}
As before, we can reduce to the case $\kappa = \kappa'$.  For each $p$ dividing $m$ let \[
a_p = v_{\p}(I) - v_{\p}(I')
\]
where $\p$ is the invertible ideal $(p, \gamma_m)$.  Let
\[J = \prod_{p \mid m} [(p, \gamma_m), p]^{a_p}[(p, 1-\gamma_m), p]^{-a_p}.\]

One can then check locally that $I = J I'$.
\end{proof}

\begin{cor}\label{cor: kernel is squares}
We have $\ker \iota_* \subset \Cl^+(R_m)^2$; that is, the kernel is contained in the principal genus.
\end{cor}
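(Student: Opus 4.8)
The plan is to read the result off almost directly from the explicit generators produced in Proposition~\ref{describe kernel}. That proposition exhibits a generating set for $\ker\iota_*$ consisting of the classes $\mathfrak{g}_p = [(p,\gamma_m),p]^2$, one for each prime $p \mid m$, and each is visibly a square in $\Cl^+(\Z[\gamma_m])$. Since the principal genus is by definition the subgroup $\Cl^+(\Z[\gamma_m])^2$ of squares, it will suffice to observe that $\ker\iota_*$ is generated by squares and that the squares form a subgroup.

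First I would confirm that each $[(p,\gamma_m),p]$ is a well-defined invertible element of $\Cl^+(\Z[\gamma_m])$, so that the expression $\mathfrak{g}_p = [(p,\gamma_m),p]^2$ genuinely lives in the square subgroup. Here the arithmetic of the primes dividing $m$ is what matters: for $p \mid m$ we have $1-4m \equiv 1 \pmod p$, so $p \nmid \Disc = 1-4m$. Hence $p$ is unramified; and since the minimal polynomial $t^2 - t + m$ of $\gamma_m$ reduces to $t(t-1) \pmod p$, the prime $p$ splits as the product of the two invertible primes $(p,\gamma_m)$ and $(p,1-\gamma_m)$, each of norm $p$. Consequently $p$ generates the norm ideal $N(p,\gamma_m) = (p) \subseteq \Z$, so $[(p,\gamma_m),p]$ is a legitimate oriented ideal class, and it is invertible because $(p,\gamma_m)$ is invertible in the Dedekind domain $\Z[\gamma_m]$.

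With this established the remainder is formal. Because $\Cl^+(\Z[\gamma_m])$ is abelian, squaring is an endomorphism and its image $\Cl^+(\Z[\gamma_m])^2$ is a subgroup. Each generator $\mathfrak{g}_p$ of $\ker\iota_*$ lies in this subgroup, so the group they generate does too, giving $\ker\iota_* \subseteq \Cl^+(\Z[\gamma_m])^2$, which is the principal genus.

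I expect no real obstacle: the one piece of genuine content is the verification that $[(p,\gamma_m),p]$ is a valid invertible class, and even the substantive computation behind the equality $\mathfrak{g}_p = [(p,\gamma_m),p]^2$—namely that $(p,\gamma_m)(p,1-\gamma_m)=(p)$ is principal, so that the two prime classes are inverse rather than independent—was already carried out in the proof of Proposition~\ref{describe kernel}. The corollary is thus a direct repackaging of that proposition.
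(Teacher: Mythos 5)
Your proof is correct and follows essentially the same route as the paper: the corollary is read off directly from Proposition~\ref{describe kernel}, whose generators $\mathfrak{g}_p = [(p,\gamma_m),p]^2$ are manifestly squares, so the subgroup they generate lies in $\Cl^+(\Z[\gamma_m])^2$ (the paper leaves this implicit and only remarks on an alternative genus-theoretic argument via quadratic forms). Your added verification that $(p,\gamma_m)$ is invertible is fine, though the justification should be that $p \nmid 1-4m$ makes the ideal coprime to the conductor of the order $\Z[\gamma_m]$, rather than an appeal to $\Z[\gamma_m]$ being Dedekind, which fails when $1-4m$ is not a fundamental discriminant.
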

This corollary can also be proved from the point of view of quadratic forms: two quadratic forms in different genera are not equivalent over $\Q$, so cannot be equivalent over $\Z[1/m]$ either.

\subsection{Consequences for prime $p$}\label{consequences}

In the case when $m = \pm p$ is prime, we have as a consequence that
\begin{prop}\label{bijection}
If $m$ is prime (possibly negative) then the map $\iota_*: \Cl^+(\Z[\gamma_m]) \to \Cl^+(R_m)$ is an isomorphism.  More generally, oriented ideal classes of $\Z[\gamma_m]$ are in bijection with oriented ideal classes of $R_m$ via $[I, \kappa] \mapsto [I R_m, \kappa]$.
\end{prop}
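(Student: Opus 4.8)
The plan is to deduce everything from the structural results just established, reducing the proposition to the single computation that the kernel of $\iota_*$ is trivial when $|m|$ is prime. Surjectivity of $\iota_*$ on oriented class groups is already furnished by Corollary~\ref{cor: surjective}, so for the isomorphism statement it suffices to prove $\ker \iota_* = \{[(1),1]\}$, where $[(1),1]$ is the identity class. Granting this, the more general bijection on all (possibly non-invertible) oriented ideal classes follows formally: injectivity is immediate from Proposition~\ref{kernel}, since two classes that become equal in $R_m$ must differ by an element of $\ker\iota_* = \{[(1),1]\}$ and hence already coincide; surjectivity follows from the observation that $R_m = \Z[\gamma_m][\frac{1}{m}]$ is a localization of $\Z[\gamma_m]$, so every fractional $R_m$-ideal is the extension $I R_m$ of a fractional $\Z[\gamma_m]$-ideal $I$, with the orientation extending along with it.

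To compute the kernel, note that when $m = \pm p$ the integer $m$ has the single prime factor $p$, so Proposition~\ref{describe kernel} gives $\ker\iota_* = \langle \mathfrak{g}_p\rangle$ with $\mathfrak{g}_p = [(p, \gamma_m), p]^2$. The key observation is that the ideal $(p, \gamma_m)$ is in fact principal: since $\gamma_m$ satisfies $\gamma_m(1-\gamma_m) = m = \pm p$, the element $\gamma_m$ divides $p$ in $\Z[\gamma_m]$, whence $(p, \gamma_m) = (\gamma_m)$. Therefore $\mathfrak{g}_p = [(\gamma_m)^2, p^2] = [(\gamma_m^2), p^2]$, and it remains only to unwind the orientation.

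Rescaling the pair $((\gamma_m^2), p^2)$ by $\alpha = \gamma_m^{-2}$ and using $N(\gamma_m) = \gamma_m(1-\gamma_m) = m$ gives
\[
\mathfrak{g}_p = [(\gamma_m^2), p^2] = [(1),\, N(\gamma_m)^{-2}\, p^2] = [(1),\, m^{-2} p^2] = [(1), 1],
\]
because $m^2 = p^2$. Hence $\ker\iota_*$ is trivial and the proposition follows. I expect the orientation bookkeeping in this final step to be the only genuine subtlety, and the point where the primality hypothesis really matters. In the real quadratic case $m = -p$ the order $\Z[\gamma_m]$ may possess no unit of norm $-1$, and then the \emph{unsquared} class $[(p,\gamma_m), p] = [(1),\, m^{-1}p] = [(1), -1]$ is a genuine element of order two rather than the identity; it is therefore essential that the kernel is generated by the square $\mathfrak{g}_p$, which cancels the sign, and one must resist the temptation to argue that $[(p,\gamma_m),p]$ is itself trivial.
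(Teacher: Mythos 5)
Your argument is correct and follows the same route the paper intends: Proposition~\ref{bijection} is stated there as an immediate consequence of Corollary~\ref{cor: surjective} and Propositions~\ref{describe kernel} and~\ref{kernel}, and your computation that $\mathfrak{g}_p=[(p,\gamma_m),p]^2=[(\gamma_m^2),p^2]=[(1),1]$ (using that $\gamma_m(1-\gamma_m)=m=\pm p$ makes $(p,\gamma_m)=(\gamma_m)$ principal of norm $\pm p$) is precisely the verification that the single generator of $\ker\iota_*$ is trivial. Your closing caution that the unsquared class $[(p,\gamma_m),p]=[(1),-1]$ can be a genuine element of order two when $m=-p$ and $\Z[\gamma_m]$ has no unit of norm $-1$ is also correct and worth keeping in mind.
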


Translating back into the language of quadratic forms, we also have the important corollary:
\begin{cor}\label{part2}
Two quadratic forms over $\Z$ of determinant $1-4p$ are $\SL_2(\Z[\frac{1}{p}])$-equivalent if and only if they are $\SL_2(\Z)$-equivalent.
\end{cor}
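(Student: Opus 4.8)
The plan is to deduce this statement from the injectivity of $\iota_*$ (Proposition~\ref{bijection}) by translating both sides through the dictionary between binary quadratic forms and oriented ideal classes. The ``if'' direction needs no work: since $\SL_2(\Z) \subseteq \SL_2(\Z[\frac{1}{p}])$, any $\SL_2(\Z)$-equivalence between two integral forms is in particular an $\SL_2(\Z[\frac{1}{p}])$-equivalence. So the whole content is the ``only if'' direction.

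First I would set up two instances of the form/ideal-class dictionary. Over the base ring $\Z$, the classical correspondence (the $g = 1$ case of the Bhargava-type bijection already invoked for items (v) and (vi)) identifies $\SL_2(\Z)$-equivalence classes of integral binary quadratic forms of discriminant $1-4p$ with oriented ideal classes $[J, \lambda]$ of the order $\Z[\gamma_p]$ of discriminant $1-4p$. Over the base ring $\Z[\frac{1}{p}]$, the bijection (v)$\leftrightarrow$(vi) identifies $\SL_2(\Z[\frac{1}{p}])$-equivalence classes of binary quadratic forms over $\Z[\frac{1}{p}]$ of discriminant $1-4p$ with oriented ideal classes of $R_p$. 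Crucially, both are built by the same recipe: pick a basis of the ideal realizing the orientation $\phi$, and read off the symmetric form $\phi(\sqrt{1-4p}\,a\wedge b)$ in that basis.

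The step I expect to be the main point to check is the compatibility of these two dictionaries with base change. Given an oriented ideal class $[J, \lambda]$ of $\Z[\gamma_p]$, the ideal $J$ is free of rank $2$ over $\Z$, and since $R_p = \Z[\gamma_p] \tensor_\Z \Z[\frac{1}{p}]$ we have $J R_p = J \tensor_\Z \Z[\frac{1}{p}]$; hence a $\Z$-basis of $J$ realizing $\lambda$ is simultaneously a $\Z[\frac{1}{p}]$-basis of $J R_p$ realizing the induced orientation $\iota_*[J, \lambda] = [J R_p, \lambda]$. Because the quadratic form is extracted by the very same formula in both settings, the integral form attached to $[J, \lambda]$ base-changes (as a form over $\Z[\frac{1}{p}]$) to the form attached to $\iota_*[J, \lambda]$. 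This is the only non-formal step: it amounts to unwinding the basis-dependent construction of the bijection and observing that it commutes with $-\tensor_\Z \Z[\frac{1}{p}]$.

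Granting this compatibility, the ``only if'' direction follows. If integral forms $Q_1, Q_2$ of discriminant $1-4p$ are $\SL_2(\Z[\frac{1}{p}])$-equivalent, let $[J_1, \lambda_1], [J_2, \lambda_2]$ be the oriented ideal classes of $\Z[\gamma_p]$ that they determine. The compatibility identifies the common $\SL_2(\Z[\frac{1}{p}])$-class of $Q_1, Q_2$ with $\iota_*[J_1, \lambda_1] = \iota_*[J_2, \lambda_2]$ in the set of oriented ideal classes of $R_p$; since $\iota_*$ is injective by Proposition~\ref{bijection}, this forces $[J_1, \lambda_1] = [J_2, \lambda_2]$, i.e.\ $Q_1$ and $Q_2$ are $\SL_2(\Z)$-equivalent. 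Together with the surjectivity from Corollary~\ref{cor: surjective}, this shows that the base-change map on equivalence classes is in fact a bijection, though only injectivity is needed here.
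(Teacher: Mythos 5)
Your proposal is correct and matches the paper's (largely implicit) argument: the paper obtains Corollary~\ref{part2} precisely by ``translating back into the language of quadratic forms'' the bijectivity of $\iota_*$ on oriented ideal classes from Proposition~\ref{bijection}, which is exactly your route. You have simply made explicit the one step the paper leaves to the reader, namely that the form/ideal-class dictionaries over $\Z$ and over $\Z[\frac{1}{p}]$ are compatible with base change along $\iota$.
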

This corollary is also implied by Trotter's work in \cite{trotter-s-equivalence}.  

We now prove Theorem~\ref{prime lower}, which shows that Heuristic~\ref{prime heuristic} is at least correct up to a a factor of $X^{\epsilon}$:

\begin{proof}[Proof of Theorem~\ref{prime lower}]

We show the equivalent statement, that the total is $\gg X^{3/2-\epsilon}/\log X$ for any $\epsilon > 0$.

Choose any $\epsilon > 0$.  By the Brauer-Siegel theorem plus the formula for class number of non-maximal orders, there exists some constant $c_\epsilon$ such that the size 
\[
|\Cl^+(\Z[\gamma_m])| \ge c_\epsilon X^{1/2-\epsilon} 
\]
for every $m \in [0, X]$.  We have just seen that $ |\Cl^+(\Z[\gamma_m])| = |\Cl^+(R_m)|$ when $m$ is prime.  Hence the total contribution of the  $\sim X/ \log X$ primes in $[0, X]$ is $\gg X^{3/2-\epsilon}/\log X$.
\end{proof}

%Note that these ideals classes satisfy one relation, coming from the identity
%\[
%\prod_{p \mid m} (p, \gamma_m)^{v_p(m)} = (\gamma_m).
%\]

\section{Heuristics}\label{heuristics}

\subsection{Derivation of Heuristics~\ref{prime heuristic} and \ref{negative prime heuristic}}
We now justify Heuristics~\ref{prime heuristic} and \ref{negative prime heuristic} using the results of the previous section.

We first consider Heuristic~\ref{prime heuristic}: we wish to estimate the total number of distinct Alexander modules (with Blanchfield pairing) having Alexander polynomial equal to $\Delta_p$ for some prime $p$ in the range $[1, X]$.  By Theorem~\ref{classification}, these are in bijection with oriented ideal classes of $R_m$.  Note that these ideal classes may not be invertible: however we expect that restricting to invertible ideal classes will not change the asymptotic order of growth (a positive proportion of $m$ have $1-4m$ squarefree).   Therefore the order of growth we seek should be the same as that of

\[
\sum_{m \in (0, X] \text { prime}} |\Cl^+(R_m)| = \sum_{m \in (0, X] \text { prime}} |\Cl^+(\Z[\gamma_m])|,
\]
where the equality is by Corollary~\ref{bijection}.

  We know by Gauss that $\sum_{m \in (0, X]} |\Cl^+(\Z[\gamma_m])| \asymp X^{3/2}$, where the sum is over all $m$.  If we then assume restricting to $m$ prime does not change the distribution of class numbers, we obtain that \[
  \sum_{m \in (0, X] \text{ prime}} |\Cl^+(\Z[\gamma_m])| \asymp X^{3/2}/(\log X),\] as in Heuristic~\ref{prime heuristic}.

We now wish to do the same thing for Heuristic~\ref{negative prime heuristic}, where $m$ ranges over negative primes.   In this case, we don't know the exact asymptotics of $\sum_{m \in [-X, 0)} |\Cl^+(\Z[\gamma_m])|$.  However Hooley \cite{hooley-quadratic} has a heuristic on average class numbers of real quadratic fields, which Cohen and Lenstra \cite{cohen-lenstra} note is compatible with their heuristics.  It predicts that $\sum_{m \in [-X, 0)} |\Cl^+(\Z[\gamma_m])| \asymp X \log^2 X$.  (Technically, Hooley only considers non-square discrimiants, but one can calculate the class number directly in the case of $1-4m = a^2$, and the total contribution is $O(X)$). Hence, after restricting to $m$ prime, we expect \[
\sum_{\substack{m \in [-X, 0) \\ |m| \text { prime}}} |\Cl^+(\Z[\gamma_m])| \asymp X \log X,
\]
as in Heuristic~\ref{negative prime heuristic}.  

\begin{rem}
Empirical evidence suggests that the heuristic reasoning above is an oversimplification: the class numbers $h(1-4p)$ are on average smaller than arbitrary $h(1-4m)$.  This appears to be related to congruence restrictions on the numbers $1-4p$: for any modulus $a$, we can only have $1-4p \equiv 1 \pmod a$ for at most one value of $a$.  Hence the heuristic arguments above give the wrong constants, but I conjecture that they at least give the right order of growth.
\end{rem} 

In the case where $m = \pm p^e$, a similar analysis applies.  The surjection $\iota_*: \Cl^+(\Z[\gamma_m]) \to \Cl^+(R_m)$ is no longer a bijection, but it does have kernel of size bounded by $e$, so our expected average size of the class group will have the same order of growth.  However, in this case we will obtain a smaller total, since the number of $m \in (0, X]$ that are prime powers is only $o(X^{1/2})$.

\subsection{Background on Cohen-Lenstra heuristics}

We now move on the the case of general $m$:  since $\iota_*$ is no longer injective, we will need to have information on the expected structure of the group $\Cl^+(\Z[\gamma_m])$, not only its size.  Fortunately, such information is provided by the Cohen-Lenstra heuristics.

In their landmark paper \cite{cohen-lenstra}, Cohen and Lenstra defined a sequence of distributions $\{\mu^u\}_{u \ge 0}$ on the set of finite abelian groups, where $\mu^0$ was expected to model the distribution of class groups of imaginary quadratic fields and $\mu^1$ the distribution of class groups of real quadratic fields.  (More generally, $\mu^i$ is supposed to model the distribution of class groups of fields with $i+1$ infinite places.)  We give a rough definition here:

\begin{defn} For a non-negative integer $u$, the Cohen-Lenstra distribution $\mu^u$ gives a finite abelian group $G$ weight proportional to $\frac{1}{|\Aut(G)| |G|^u}$.
\end{defn}

For $u \ge 1$, the sum of all weights converges to $1$, so $\mu^u$ gives a measure on the set of all finite abelian groups, where any given $G$ appears with positive probability.  Cohen and Lenstra showed (\cite{cohen-lenstra}, Thm 5.8) that for $u \ge 2$, the average size of $G$ is finite, and goes to $1$ as $u \to \infty$, while for $u = 1$ the average size of $G$ is unbounded.

On the other hand, for $u = 0$ the sum of all weights diverges, reflecting the fact that for imaginary quadratic fields we see any given class group only finitely many times.  However, for many ``reasonable'' functions $f$ on finite abelian groups, Cohen and Lenstra were still able to make sense of the average value $\mathbb{E}(f) = \text{``$\int f d\mu^0$''}$ of $f$ evaluated at a random group drawn from $\mu^0$, by ordering the set of finite abelian groups in a natural way (e.g. by size) and taking a limit.  For instance, one can explicitly calculate the probability that a random group drawn from $\mu^0$ is cyclic; the answer is positive but less than $1$.

Cohen and Lenstra noticed that their family of  distributions $\mu^u$ were related by the following:
\begin{prop}[Cohen-Lenstra \cite{cohen-lenstra}]\label{relation}
Let $u, k \ge 0$.  Suppose $G$ is randomly selected from $\mu^{u}$, and $g_1, \dotsc, g_k$ are randomly selected elements of $G$.  The distribution of $G / \langle g_1, \dotsc, g_k \rangle$ agrees with $\mu^{u+k}$. 
\end{prop}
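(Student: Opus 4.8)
The plan is to compute, for each finite abelian group $H$, the probability $P_H$ that $G/\langle g_1,\dots,g_k\rangle\cong H$ when $G\sim\mu^u$ and $g_1,\dots,g_k$ are chosen independently and uniformly from $G$, and to show that $P_H$ is proportional to $\frac{1}{|\Aut H|\,|H|^{u+k}}$ with a constant of proportionality independent of $H$. Once this is established the normalization takes care of itself: combining $\sum_H P_H = 1$ with $\sum_H \frac{1}{|\Aut H|\,|H|^{u+k}} = w_{u+k}$ (the normalizing constant of $\mu^{u+k}$) forces the constant to be $1/w_{u+k}$, so that $P_H = \mu^{u+k}(H)$ exactly. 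Writing $\mu^u([G]) = \frac{1}{w_u |\Aut G|\,|G|^u}$ and conditioning on $G$, we have
\[
P_H = \frac{1}{w_u}\sum_{[G]}\frac{N_H(G)}{|\Aut G|\,|G|^{u+k}}, \qquad N_H(G) := \#\{(g_i)\in G^k : G/\langle g_1,\dots,g_k\rangle\cong H\},
\]
the outer sum running over isomorphism classes of finite abelian groups.

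Next I would unfold $N_H(G)$ according to the subgroup $N=\langle g_1,\dots,g_k\rangle$. Grouping the $k$-tuples by the subgroup they generate gives $N_H(G)=\sum_{N\le G,\ G/N\cong H} s_k(N)$, where $s_k(N)=|\operatorname{Surj}(\Z^k,N)|$ is the number of generating $k$-tuples of $N$ and depends only on the isomorphism type of $N$. I would then reindex the whole double sum by the isomorphism type of the kernel $N$, using $|G|=|N|\,|H|$. The geometric content is then captured by $\sum_{[G]} \frac{1}{|\Aut G|}\#\{N\le G : N\cong N_0,\ G/N\cong H\}$ for a fixed type $N_0$, which (after multiplying by $|\Aut H|$ to pass from subgroups to surjections $G\to H$, and by $|\Aut N_0|$ to pass from surjections to short exact sequences $0\to N_0\to G\to H\to 0$) reduces to a mass count of extensions.

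The heart of the argument is the extension mass formula. Counting short exact sequences $0\to N_0\to G\to H\to 0$ as a groupoid whose morphisms are isomorphisms of $G$ fixing $N_0$ and $H$, one computes the total mass in two ways: object-by-object it equals $|\Ext^1(H,N_0)|/|\Hom(H,N_0)|$ (each isomorphism class of extension has automorphism group $\Hom(H,N_0)$), while summing over $G$ it equals $\sum_{[G]}\frac{\#\{\text{s.e.s.}\}}{|\Aut G|}$. For finite abelian groups $|\Ext^1(H,N_0)|=|\Hom(H,N_0)|$, so this mass is simply $1$. Feeding this back yields
\[
\sum_{[G]}\frac{N_H(G)}{|\Aut G|\,|G|^{u+k}} = \frac{1}{|\Aut H|\,|H|^{u+k}}\sum_{N_0}\frac{s_k(N_0)}{|\Aut N_0|\,|N_0|^{u+k}},
\]
and the sum on the right is manifestly independent of $H$. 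This is exactly the proportionality sought, completing the proof modulo normalization.

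I expect the main obstacle to be the extension mass formula together with the identity $|\Ext^1(H,N)|=|\Hom(H,N)|$: this is where the special features of finite abelian groups enter, and it is what makes the $H$-dependence collapse to the clean factor $|\Aut H|^{-1}|H|^{-(u+k)}$. A secondary point requiring care is convergence and the meaning of $\mu^0$: for $u\ge 1$ the rearrangement is justified by absolute convergence (since $s_k(N)\le|N|^k$ gives $\sum_{N_0} s_k(N_0)/(|\Aut N_0|\,|N_0|^{u+k})\le w_u<\infty$), whereas for $u=0$ the statement must be interpreted through the limiting/averaging procedure of Cohen and Lenstra rather than as an equality of genuine probability measures. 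One could alternatively reduce to the case $k=1$ by induction—observing that the image of a fresh uniform element of $G$ in $G/\langle g_1\rangle$ is again uniform—but the direct computation above handles all $k$ at once.
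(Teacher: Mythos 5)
The paper does not prove this proposition---it is quoted from Cohen--Lenstra with a citation---so there is no in-paper argument to compare against; I can only assess your proof on its own terms, and it is correct. The chain of reductions (condition on $G$, group $k$-tuples by the subgroup they generate, pass from subgroups to surjections to short exact sequences by inserting the factors $|\Aut H|$ and $|\Aut N_0|$, and collapse the $G$-sum via the extension mass formula $\sum_{[G]}\#\{\text{s.e.s.}\}/|\Aut G|=|\Ext^1(H,N_0)|/|\Hom(H,N_0)|=1$) is exactly right, and it cleanly isolates where finiteness and commutativity are used, namely in $|\Ext^1(H,N_0)|=|\Hom(H,N_0)|$ and in identifying the automorphism group of an extension with $\Hom(H,N_0)$. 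Your convergence bound $s_k(N)\le |N|^k$, giving $\sum_{[N_0]}s_k(N_0)/(|\Aut N_0|\,|N_0|^{u+k})\le w_u<\infty$ for $u\ge 1$, justifies the rearrangement, and the normalization argument then pins down the constant. The one point to state explicitly rather than in passing is the case $u=0$: there $w_0=\infty$ and the constant $C=\sum_{[N_0]}s_k(N_0)/(|\Aut N_0|\,|N_0|^{k})$ also diverges (already for $k=1$ the cyclic groups contribute $\sum_n 1/n$), so the proposition in that case is not a statement about probability measures at all but must be read, as you note, through Cohen and Lenstra's limiting averages of suitable test functions; your computation then shows the relevant ratios of weights agree, which is the form in which the result is actually used. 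This is the standard modern (groupoid-mass) repackaging of Cohen--Lenstra's original argument and is a perfectly good proof of the cited fact.
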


The numerical evidence on class groups of quadratic fields indicates that the Cohen-Lenstra heuristics correctly predict the $p$-part of the class group for $p$ odd.  On the other hand, Cohen and Lenstra noticed that their heuristics could not be right for the 2-part of the class group, as they conflicted with genus theory.  Later, Gerth \cite{gerth_4, gerth_general} observed that this was easily fixed by restricting to the principal genus, that is, the squares in the oriented class group.

\begin{heuristic*}[Cohen-Lenstra, refined by Gerth]
The distribution of $\Cl^+(\cO_K)^2$ as $K$ runs through all imaginary quadratic fields agrees with $\mu^0$.

The distribution of $\Cl^+(\cO_K)^2$ as $K$ runs through all real quadratic fields agrees with $\mu^1$.
\end{heuristic*}

%Cohen and Lenstra also observed that the the numerical data also suggests that class groups of families of rings of the form $\cO_K[\frac{1}{p}]$, where $p$ is a prime that splits in $K$, show similar behavior to those of real quadratic fields, that is, should also be modeled by the distribution $\mu_1$.  More generally, 

%\begin{heuristic*}[Cohen-Lenstra for $\cO_{K, S}$]
%Suppose $K$ is a randomly chosen quadratic field and $S$ is a randomly chosen set of places of $K$ subject to $|S|=n$ and $S$ contains all the infinite places. Then the distribution of $\Cl^+(\cO_{K, S})^2$ agrees with $\mu^n$. 
%\end{heuristic*}

%This heuristic can in fact be deduced from the original Cohen-Lenstra heuristic using Proposition~\ref{relation} and the Chebotarev density theorem.  However, it can only be applied to families $\cO_{K, S}$ where $S$ is chosen independently from $K$, so, as we will see, we cannot apply it directly to the rings $R_m$.

Cohen and Lenstra noted that their heuristic would need to be modified in the case of quadratic orders, based on what we know about the relationship between $\Cl^+(\cO_D)$ and $\Cl^+(\cO_K)$ for a non-maximal order $\cO_D \subset \cO_K$.  As far as we know, the details of this modification have not been written down in full, though Bhargava and Varma \cite{bhargava-varma} have proved some results confirming the Cohen-Lenstra heuristics for $3$-torsion in quadratic orders.

\subsection{Heuristics for our problem}
For the remainder of the discussion, we will ignore these issues with non-maximal orders, and restrict to ourselves to considering heuristics for the class groups $\Cl^+(R_m)$ for those $m$ with $1-4m$ squarefree. 

There is one bit of information about the class group $\Cl^+(R_m)$ that we can determine exactly: we can obtain the $2$-rank by genus theory.  We first note that by Corollaries~\ref{kernel} and \ref{cor: kernel is squares} we know that  \[ \Cl^+(R_m)/ \Cl^+(R_m)^2 \isom \Cl^+(\Z[\gamma_m]) / \Cl^+(\Z[\gamma_m])^2\] where the rank $r_m$ equals the number of distinct prime factors $\omega(m)$ of $m$, plus a bounded correction term.  By Corollaries~\ref{kernel} and \ref{cor: kernel is squares}, that $\Cl^+(\Z[\gamma_m])$ also has the same rank $r_m$.

As previously noted, genus theory says that the $2$-rank of the oriented class group $\Cl^+(R_m)$ equals $\omega(m)$ plus a bounded correction term.  We will make the heuristic assumption that $\Cl^+(R_m) / \Cl^+(R_m)^2$ and $\Cl^+(R_m)^2$ are independent of each other.  It then remains to give heuristics for $\Cl^+(R_m)^2$.

By Corollaries~\ref{cor: surjective} and \ref{cor: kernel is squares}, we have a short exact sequence
\[
1 \to \ker i_* \to \Cl^+(\Z[\gamma_m])^2 \to \Cl^+(R_m)^2 \to 1.
\]

Because we assumed $1-4m$ squarefree, $\Z[\gamma_m]$ is the full ring of integers of $\Q(\sqrt{1-4m})$.   The Cohen-Lenstra heuristics predict that the distribution of the groups $\Cl^+(\Z[\gamma_m]^2)$ is modeled by $\mu_0$ as $m$ ranges over the positive integers and by $\mu_1$ as $m$ ranges over the negative integers.

Furthermore, Proposition~\ref{describe kernel} tells us that $\ker i_*$ is generated by the classes $\mathfrak{g}_p = [(p, \gamma_i), p]^2$ for $p$ dividing $m$. 

We know one relation between these classes, namely that if $m = \prod_{i = 1}^k p_i^{n_i}$, then 
\begin{equation*}
\begin{split}
\prod_{p \mid m}\mathfrak{g}_{p}^{v_p(m)} &=  \prod_{p \mid m}  [(p, \gamma_m), p]^{v_p(m)} \prod_{p \mid m} [(p, 1-\gamma_m),p]^{-v_p(m)}\\
&= [(\gamma_m), m][((1-\gamma_m)^{-1}), m]^{-1} \\
&= [(\gamma_m) (1- \gamma_m)^{-1}, 1] 
\end{split}
\end{equation*}
which is the trivial class.

This motivates the following heuristic:
\begin{heuristic}
Let $n_1, \dotsc, n_k$ be fixed positive integers.  Let $m$ run through all positive integers of the form $p_1^{n_1} \dotsb p_k^{n_k}$ with $1-4m$ squarefree.  The distribution of the finite abelian groups $(\Cl^+(R_m))^2$ agrees with the distribution of $G/\langle g_1, \dotsc, g_k \rangle$, where $G$ is a finite abelian group selected from the Cohen-Lenstra distribution $\mu_0$ and $g_1, \dotsc g_k$ are randomly chosen elements of $G$ subject to the constraint that $\prod {g_k}^{n_k} = 1$.
\end{heuristic}
\begin{heuristic}
Let $n_1, \dotsc, n_k$ be fixed positive integers.  Let $m$ run through all negative integers of the form $-p_1^{n_1} \dotsb p_k^{n_k}$ with $1-4m$ squarefree.  The distribution of the finite abelian groups $\Cl^+(R_m)$ agrees with the distribution of $G/\langle g_1, \dotsc, g_k \rangle$, where $G$ is a finite abelian group selected from the Cohen-Lenstra distribution $\mu_1$ and $g_1, \dotsc g_k$ are randomly chosen elements of $G$ subject to the constraint that  $\prod {g_k}^{n_k} = 1$.
\end{heuristic}

If some $n_i = 1$, then $g_i = \prod_{k \ne i} g_k^{-n_k}$ is uniquely determined by the other $g_k$, which are now independently chosen random elements of $G$.  In these cases, the heuristics can be simplified to
\begin{heuristic}\label{positive not powerful}
Let $n_1, \dotsc, n_k$ be fixed positive integers such that some $n_i = 1$.  Let $m$ run through all positive integers of the form $p_1^{n_1} \dotsb p_k^{n_k}$ with $1-4m$ squarefree.  The distribution of the finite abelian groups $(\Cl^+(R_m))^2$ agrees with $\mu_{k-1}$.
\end{heuristic}
\begin{heuristic}\label{negative not powerful}
Let $m$ run through all negative integers of the form $-p_1^{n_1} \dotsb p_k^{n_k}$ with $1-4m$ squarefree.  The distribution of the finite abelian groups $(\Cl^+(R_m))^2$ agrees with $\mu_{k}$.
\end{heuristic}
Note that these heuristics apply to all integers $m$ which are not powerful.   It is known the number of integers $m \in [0, X)$ which are powerful is $O(X^{1/2})$ for some constant $c$.  As a result it will generally be safe to ignore them when obtaining heuristics for the total behavior.

These heuristics are fairly na\"{i}ve and it is worth investigating them further for accuracy, but I conjecture that they at least can be used to predict the correct order of magnitude for the average sizes of these groups.  

\begin{rem}
We have not quite used all the information that we can obtain about $\ker \iota_* = \langle \mathfrak{g}_p: p \mid m \rangle$.  As we will see in the proof of Lemma~\ref{key lemma}, in the case of $m$ positive (so negative discriminant $1-4m$), the kernel $\ker \iota_*$ must have size equal to at least the number $d_{\le m^{1/4}}(m)$ of factors of $m$ that are at most $m^{1/4}$.  However, this is compatible with our heuristic:  $\Cl^+(R_m)^2$ is predicted by Cohen-Lenstra to have small non-cyclic part, but also has size $\gg m^{1/2 - \epsilon}$ Hence we already expect that $\mathfrak{g}_1, \dotsc, \mathfrak{g}_k$ will generate a large subgroup of $\Cl^+(R_m)^2$ with high probability.
\end{rem}

\subsection{Totals over families of $m$ with a fixed number of prime factors}
 We first look at some special families of $m$ with a small number of prime factors.  

First of all, in the cases $m = p$ and $m  = -p$, where $\ker \iota_*$ is trivial and $\Cl^+(R_m)^2 \isom \Cl^+(\Z[\gamma])^2$, we recover Heuristic~\ref{prime heuristic} and Heuristic~\ref{negative prime heuristic}.  In the closely related, and previously mentioned, case where $p^e$ is a positive prime power, we obtain heuristics with a similar average rate of growth, but since there are only $o(X^{1/2})$ prime powers up to $X$, we expect the total will only grow as $o(X)$.  For the remainder of this section, we will only consider non-powerful integers $m$, as the powerful values of $m$ are sparse and will only contribute error terms.

We now consider the special case of the family $m = + p_1 p_2$:  here we predict that $\Cl^+(R_m)^2$ is modeled by the Cohen-Lenstra distribution $\mu_1$.  As previously noted, the expected size of a randomly selected group drawn from $\mu_1$ is infinite, so we need an additional heuristic here for how fast the average size over discriminants $\le X$ goes to infinity as $X \to \infty$.

We argue by analogy with Hooley's results \cite{hooley-quadratic} on the distribution of sizes of class groups of real quadratic fields, in which case the principal genus is also modeled by the same distribution $\mu_1$.  Hooley's heuristics imply that over all real quadratic fields of discriminant $\le X$, the average size of the class group grows like a constant times $\log^2 X$, and the average size of the principal genus grows like a constant times $\log X$.  (The latter heuristic is only formulated for prime discriminants, where the size of the principal genus is fixed, but we expect it to apply to other families also.)

Hence, in our setting, we conjecture that the average size of $\Cl^+(R_m)^2$ over all 
$m = p_1 p_2 \in (0, X]$ is asymptotically a constant times $\log X$.  We also know that $\Cl^+(R_m)/\Cl^+(R_m)^2$ has order equal to $2^{\omega(1-4m)+O(1)}$, which, when averaged over all $m = p_1 p_2 \in (0, X]$, is asymptotic to (a different) constant times $\log X$. 

By our assumption of independence, we deduce that heuristically the average size of $\Cl^+(R_m)$ over all $m = p_1 p_2 \in (0, X]$ grows like $(\log X)^2$.  Now the total number of $m \in (0, X]$ that are a product of two primes grows like a constant times $X/(\log X)$, and so we obtain the heuristic
\begin{equation} \label{eq:two primes}
    \sum_{m = p_1 p_2 \in (0, X]} |\Cl^+(R_m)| \asymp X \log X.
\end{equation}

We now consider the case of $m$ positive having $k \ge 3$ distinct prime factors.  Here we expect (by Heuristic~\ref{positive not powerful}) that $\Cl^+(R_m)^2$ is a group randomly drawn from the distribution $\mu_{k-1}$.  Since $k -1 \ge 2$, we know that the average size of a group drawn from $\mu_{k-1}$ is finite.  Hence, heuristically, the average of $|\Cl^+(R_m)^2|$ over all $m \in (0, X]$ with $k$ prime factors should tend to a constant as $m \to \infty$.  Taking into account the principal genus, which as before has average size $\asymp \log X$, we obtain  the following asymptotic for the average:

\begin{equation} \label{eq: k primes}
   \frac{ \sum_{m = p_1 p_2 \dotsm p_k \in (0, X]} |\Cl^+(R_m)| } { \sum_{m = p_1 p_2 \dotsm p_k \in (0, X]} 1 }\sim c^{+}_k \log X.
\end{equation}
where the constant $c^{+}_k$ goes to $1$ as $k \to \infty$, and in particular is bounded.

By the same argument for $m$ negative with $k \ge 2$ distinct prime factors, we obtain

\begin{equation} \label{eq: minus k primes}
   \frac{ \sum_{m = p_1 p_2 \dotsm p_k \in (0, X]} |\Cl^+(R_m)| } { \sum_{m = p_1 p_2 \dotsm p_k \in (0, X]} 1 }\sim c^{-}_k \log X
\end{equation}
where the constant $c^{-}_k$ also goes to $1$ as $k \to \infty$.

\subsection{Justification of Heuristic~\ref{not prime}} 

We are now ready to justify Heuristic~\ref{not prime}.  We break up the sum we need to bound by the sign of $m$ and the number of distinct prime factors $\omega(m)$ of $m$.
\begin{multline*}
\sum_{\substack{m \in [-X, X] \\ |m| \text{ not prime}}} |\Cl^+(R_m)| = \sum_{\substack{m =  \pm p^k \in [-X, X]  \\ k > 1}} |\Cl^+(R_m)| +  \sum_{\substack{m  \in (0, X] \\ \omega(m) = 2}} |\Cl^+(R_m)| \\ +  \sum_{\substack{m  \in (0, X] \\ \omega(m) \ge 3}} |\Cl^+(R_m)|  + \sum_{\substack{m \in [-X, 0) \\ \omega(m) \ge 2}}  |\Cl^+(R_m)|.
\end{multline*}

The first term, $\sum_{\substack{m =  \pm p^k \in [-X, X]  \\ k > 1}} |\Cl^+(R_m)|$, we expect is $o(X)$ by the discussion above.  However, we will see that all the remaining three terms contribute $\asymp X \log X$

First of all, we expect $\sum_{\substack{m  \in (0, X] \\ \omega(m) = 2}} |\Cl^+(R_m)| \sim \sum_{\substack{m  \in (0, X] \\ m = pq}} |\Cl^+(R_m)| $ since a density $1$ subset of the $m$ with two prime factors are of the form $m = pq$, so this total is heuristically $\asymp X \log (X)$ by \eqref{eq:two primes}.

The other two terms give a heuristic total of $X \log X$ for a different reason.  The second sum \[\sum_{\substack{m  \in (0, X] \\ \omega(m) \ge 3}} |\Cl^+(R_m)| \] is a sum of $\asymp X$ terms which are heuristically on average $\asymp \log X$ by \eqref{eq: k primes}, and the same for 
\[
\sum_{\substack{m \in [-X, 0) \\ \omega(m) \ge 2}}  |\Cl^+(R_m)|
\]
by \eqref{eq: minus k primes}.

Hence, our total sum is $\asymp X \log (X)$ as in Heuristic 3, and both the subset of $m$ having the form $+p_1 p_2$ and the complement of this subset contribute main terms.

\section{Proof of Theorem~\ref{prime upper} by Sieving}
By Corollaries~\ref{part1} and \ref{part2}, it's enough to show 

\begin{prop}\label{sl2form}
The total number of $\SL_2(\Z)$-equivalence classes of binary quadratic forms $ax^2 + bxy + c y^2$ with prime discriminant of the form $p = 1-4m$ for $m \in [0, X]$ is bounded above by \textit{$O(\frac{X^{3/2}}{\log X})$.}
\end{prop}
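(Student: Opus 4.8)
The plan is to count reduced forms directly and apply an upper-bound sieve to the primality condition. Since the discriminant $p = 1-4m$ is negative for $m \ge 1$, write $\ell = -p = 4m-1$; then $\ell$ is a positive prime (necessarily $\ell \equiv 3 \pmod 4$) with $\ell \le 4X$, and $-\ell$ is a fundamental discriminant, so every form of this discriminant is primitive and positive (or negative) definite. The number of $\SL_2(\Z)$-equivalence classes of such forms is, up to the harmless factor $2$ for the two definite signs, the number of reduced forms, i.e.\ the number of triples $(a,b,c)\in\Z^3$ with $|b|\le a\le c$ and $4ac-b^2 = \ell$. Hence the quantity to bound above is
\[
N(X) = \#\{(a,b,c) : |b|\le a\le c,\ 4ac-b^2 \text{ prime},\ 4ac-b^2 \le 4X\}.
\]
The reduction inequalities give $4ac - b^2 \ge 4a^2 - a^2 = 3a^2$, so every contributing $a$ satisfies $3a^2 \le 4X$, i.e.\ $a \le 2\sqrt{X/3}$; this is the only constraint on $a$ I will use.

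Next I would fix $a$ and $b$ with $|b|\le a$ and count the admissible $c$. For fixed $(a,b)$ the value $q := 4ac - b^2$ runs, as $c$ increases, through the arithmetic progression $q \equiv -b^2 \pmod{4a}$, and counting $c$ with $q$ prime and $q \le 4X$ is the same as counting primes $q \le 4X$ in this progression. When $\gcd(b,2a)=1$ the residue $-b^2$ is coprime to the modulus $4a$, and an upper-bound sieve — Rosser's sieve, or equivalently the Brun--Titchmarsh inequality — yields
\[
\#\{c\} \ll \frac{X}{\phi(4a)\,\log(X/a)}.
\]
The decisive feature is uniformity in the modulus: since $a \le 2\sqrt{X/3}$ we have $X/a \gg \sqrt{X}$, hence $\log(X/a) \gg \log X$ uniformly in $a$, so the full $1/\log X$ saving persists all the way up to $a \asymp \sqrt{X}$. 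When $\gcd(b,2a)>1$ (in particular when $b$ is even) the progression contains at most one prime, contributing $O(1)$; summed over all $(a,b)$ with $a \le 2\sqrt{X/3}$ and $|b|\le a$ this amounts to $O(X)$, negligible against the target.

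Finally I would sum over $b$ and then $a$. For fixed $a$ there are at most $2a+1$ values of $b$, and $\phi(4a)\asymp\phi(a)$, so
\[
N(X) \ll \frac{X}{\log X}\sum_{a \le 2\sqrt{X/3}} \frac{a}{\phi(4a)} + O(X) \ll \frac{X}{\log X}\sum_{a \le 2\sqrt{X/3}} \frac{a}{\phi(a)} + O(X).
\]
Using the standard average bound $\sum_{a\le T} a/\phi(a) \ll T$, the inner sum is $\ll \sqrt{X}$, which gives $N(X) \ll X^{3/2}/\log X$, as claimed.

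The main obstacle is establishing the sieve upper bound for the number of admissible $c$ \emph{uniformly} in the modulus $4a$ over the entire range $a \le 2\sqrt{X/3}$, and in particular retaining the full $1/\log X$ saving as $a$ approaches $\sqrt{X}$, where the interval of admissible $c$ becomes short. This is exactly the point at which the reduction inequality $3a^2 \le 4ac-b^2 \le 4X$ is essential: it forces $X/a \gg \sqrt{X}$ and hence $\log(X/a)\gg\log X$, so that the sieve main term dominates uniformly and summing over $a$ does not erode the saving. The remaining ingredients — the bound $\sum_{a\le T} a/\phi(a)\ll T$ and the negligibility of the non-primitive pairs $(a,b)$ — are routine.
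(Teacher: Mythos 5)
Your argument is correct, and it reaches the bound by a genuinely different route than the paper. The paper runs Rosser's sieve directly on the three-dimensional lattice-point count: for every squarefree $d$ up to a level $Z=X^{\alpha}$ it estimates the number of reduced $(a,b,c)$ with $d\mid m$ via Davenport's lemma (main term $c_1\rho(d)X^{3/2}$ with local density $\rho(d)=\prod_{p\mid d}\tfrac{p+1}{p^2}$, and an error term controlled by the projections of the truncated cusp), checks that the sieve has dimension $1$, and takes $\alpha<1/4$ so that the accumulated remainders are $o(X^{3/2}/\log X)$. You instead fix $(a,b)$, observe that the discriminant is linear in $c$ so that the primality condition becomes a count of primes in an arithmetic progression to modulus $\asymp a$, and invoke Brun--Titchmarsh; the reduction inequality $3a^2\le 4ac-b^2\le 4X$ is used exactly where the paper implicitly needs it (to control the cusp), namely to force $a\ll\sqrt{X}$ and hence $\log(X/a)\gg\log X$ uniformly in the modulus. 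Your version is shorter and avoids both Davenport's lemma and the remainder bookkeeping, at the cost of being special to the binary case: the paper's setup is designed to generalize to $\Sp_{2g}$-orbits on $2g$-ary forms, where one cannot reduce to primes in progressions by freezing all but one coordinate. The supporting steps you use (that $\gcd(b,2a)>1$ forces at most one prime in the progression, and that $\sum_{a\le T}a/\phi(a)\ll T$) are standard and correct.

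One discrepancy to fix: you sieve on the condition that $4ac-b^2=4m-1$ is prime, whereas the proposition --- read in the context of Theorem~\ref{prime upper} and Corollary~\ref{part2}, whose wording here is admittedly ambiguous --- requires $m$ itself to be prime; the paper's own sieve condition is $(m,P(Z))=1$. This changes nothing structurally: writing $b=2b'+1$, one has $m=ac-b'(b'+1)$, which for fixed $(a,b')$ runs through the progression $-b'(b'+1)\pmod{a}$ as $c$ varies, so Brun--Titchmarsh applies with modulus $a$ in place of $4a$, and the degenerate case $\gcd(b'(b'+1),a)>1$ again contributes at most one $c$ per pair. Your proof therefore transfers verbatim to the intended statement.
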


We will actually show that the total for $m \in [X, 2X]$ is also $O(\frac{X^{3/2}}{\log X})$, and the proposition will follow by summing.  Also, we will only count the positive definite quadratic forms, as the count of negative definite forms is the same.

We follow the approach of Rosser's sieve   \cite{iwaniec-rossers_sieve}, modifying the terminology to suit our approach.  We introduce an auxiliary parameter $ Z \le X$ whose value will be chosen later, and let $P(Z)$ denote the product of all primes up to $Z$.  Let
\[
\mathcal{F} = \{(\alpha, \beta, \gamma) \in \R^3 \mid |\beta| \le \gamma \le \alpha\}
\]
be the standard fundamental domain for $\SL_2(\Z)$ acting on positive definite binary quadratic forms.

Then the total we wish to bound is at most:
\[
S(X, Z) := \sum_{\substack{X \le m \le 2X \\ (m, P(Z))=1}}  \#\{(a, b, c) \in \Z^3 \cap \mathcal{F}  : b^2 -4ac =1-4m\}
\]
Note here that $b^2 -4ac =1-4m$ implies $b$ odd: we write $b = 2b' + 1$ and let $\mathcal{F}'$ be the preimage of $\mathcal{F}$ under the affine transformation $(\alpha, \beta, \gamma) \mapsto (\alpha, 2\beta+1, \gamma)$.  Using this change of variables
\[
S(X, Z) = \sum_{\substack{X \le m \le 2X \\ (m, P(Z))=1}}  \#\{(a, b', c) \in \Z^3 \cap \mathcal{F}'  : ac - b'(b'+1) = m \}
\]

To apply the sieve, we need estimates on the following quantities for all squarefree $d \le X$:
\begin{equation}
S_d(X) := \sum_{\substack{m \in [X, 2X] \\ d \mid m}} \#\{(a, b', c) \in \Z^3  \cap \mathcal{F}' : ac - b'(b'+1)=m\}.
\end{equation}
\begin{lemma}
For a positive integer $s$,  let $\rho(s) = \prod_{p \mid s} \frac{p+1}{p^2}$.  If $d$ is a squarefree positive integer $\le X$, there exist explicit real constants $c_1, c_2$ such that
\begin{equation}
S_d(X) = c_1 \rho(d) X^{3/2} + R_d(X),
\end{equation}
 and the error term $R_d(X)$ is bounded by
\begin{equation}\label{bound R_d}
|R_d(X)| \le c_2 X \rho(d) (\max (d, \log X)).
\end{equation}
\end{lemma}
\begin{proof}
We observe that for all squarefree $d$, $S_d(X)$ counts the number of points in the intersection of the region 
\[\mathcal{R}_X = \{(\alpha, \beta', \gamma) \in \mathcal{F}' : | \alpha \gamma - \beta'(\beta'+1)| \le X\] 
with the union of the cosets of $(d \Z)^3$ on which the function $ac - b(b+1)$ vanishes modulo $d$.

We wish to apply:
\begin{lemma}[Davenport] \cite{davenport}
Let $R$ be a bounded semi-algebraic region in $\R^n$, defined by $k$ polynomial inequalities of degree at most $\ell$.  Then the number of points $(a, b, c)\in \Z^n \cap R$ can be asymptotically expressed as 
\[
\vol(R) + \epsilon(R)
\]
with the error term $\epsilon(R)$ bounded in size by $\epsilon(R) < \kappa \max(\vol (\overline{R}), 1)$ where $\overline{R}$ runs over all projections of $R$ onto subspaces of $\R^n$ spanned by coordinate axes, and $\kappa = \kappa(n, m, k, \ell)$ is some explicit constant depending only on $n, m, k$, and $\ell$.
\end{lemma}

We cannot apply Davenport's lemma directly because $\mathcal{R}_{X}$ goes off to infinity.  Instead, we truncate the cusp: for a positive real parameter $R$, define
\[
\mathcal{R}_{X, T} = \{(x, y, z)\in \mathcal{R}_X \mid z < T\}.
\]
We observe that any lattice point $(a, b, c) \in \mathcal{R}_X$ has $c \le 2X$, so also belongs to $R_{X, 2X}$.

One can calculate that the largest $1$-dimensional projection of $R_{X, 2X}$ has length $2X$, while the largest $2$-dimensional projection of $R_{X, 2X}$ has area $c_3 X \log X$ for an explicit constant $c_3$.

Now let $L_1, \dotsc, L_n$ be the cosets of $(d \Z)^3$ for which $ac - b'(b'+1) \equiv 0 \pmod d$ for all $(a, b', c) \in L_i$.  The number $n$ is equal to the number of solutions to $ac - b'(b'+1) = 0$ in $(\Z/d\Z)^3$.   A calculation with the Chinese remainder theorem gives  $n= \rho(d) d^3$.

Applying Davenport's lemma to $R_{X, t}$ rescaled by $d^{-1}$ and translated appropriately, we obtain that there exists a real number $\kappa$ such that for each $i$
\begin{equation}\label{apply davenport}
\begin{split}
\# (R_{X,2X} \cap L_i) - c_1 d^{-3} X^{3/2} &\le \kappa(\max(d^{-2}X \log X, d^{-1} X, 1)\\
&= \kappa d^{-2} X \max(\log X, d)  
\end{split}
\end{equation}
where the last step uses $d \le X$.

Summing over all $\rho(d)d^3$ values of $i$ and applying the triangle inequality, we obtain
\begin{equation*}
\begin{split}
S_d(X) - \rho(d) c_1 d^{-3} X^{3/2} &= \sum_{i} (\# (R_{X,2X} \cap L_i) - c_1 d^{-3} X^{3/2})\\
& \le \rho(d) \kappa d^{-2} X \max(\log X, d) 
\end{split}
\end{equation*}
as desired.

\end{proof}

We are now ready to prove Proposition~\ref{sl2form}.

\begin{proof}
We apply Rosser's sieve.   First we calculate the ``sieving density,'' also known as the ``dimension''.  The following inequality is analogous to (1.3) in \cite{iwaniec-rossers_sieve}: for all $Z>W \ge 2$ we have

\begin{equation}
\prod_{W \le p < Z} ( 1 - \rho(p))^{-1} \le \left(\frac{\log Z}{\log V} \right)^{\kappa} \left(1 + \frac{K}{\log W} \right).
\end{equation}
where $\kappa = 1$ and $K$ is a sufficiently large constant.  This is true by comparing to the product $\prod_{W<p<Z} (1 - 1/p)$ and applying Mertens' formula for the asymptotic growth of the latter.

Therefore we may apply (the first half of) Theorem 1.4 of \cite{iwaniec-rossers_sieve} with $y= Z$ (so that $s=1$) to obtain
\begin{equation}\label{using rosser}
S(X, Z) < X^{3/2} \prod_{p<Z} (1- \rho(p)) \left(F(1) + e^{\sqrt{K}} Q(1) (\log Z)^{-1/3} \right) + \sum_{d < Z \text { squarefree}} |R_d(X)|
\end{equation}
where $F(s)$, $Q(s)$ are specific functions defined in \cite{iwaniec-rossers_sieve}; we will not need any properties of them, just that $F(1)$ and $Q(1)$ are explicit constants.

Using our previous result that $\prod_{p<Z} (1 - \rho(p)) \asymp 1/\log(Z)$, we see that the first term is $O(X^{3/2}/\log Z)$.

Applying \eqref{bound R_d} to the second term gives

\begin{equation} \label{summing bounds}
\sum_{d < Z \text { squarefree}} |R_d(X)| \le c_1 X \sum_{d < Z \text{ squarefree}} \rho(d) d (\max (d, \log X)).
\end{equation}

We estimate $\rho(d)$.  Since $d$ is squarefree, we can write $d$ as a product of distinct primes: $d = \prod_{i = 1}^{n_d} p_i$.  Then we make the crude bound
\begin{equation}\label{bounding rho}
\begin{split}
\rho(d) &= \prod_{i = 1}^{n_d} \frac{p_i + 1}{p_i^2} \\
& = \frac{1}{d} \prod_{i =1}^{n_d} \left(1  + \frac{1}{p_i} \right)\\
& = \frac{1}{d} \sum_{d' \mid d} \frac{1}{d'}\\
& \le \frac{1}{d} \sum_{1  \le d' \le Z} \frac{1}{d'}\\
& \le \frac{\log Z + 1}{d}.
\end{split}
\end{equation}

Plugging \eqref{bounding rho} into the sum in \eqref{summing bounds}, we obtain
\begin{equation}\begin{split}
\sum_{d < Z \text { squarefree}} |R_d(X)| &\le c_1 (\log Z + 1)  \sum_{d < Z} \max(d, \log x)\\
& \le c_1 (\log Z + 1) \sum_{d<Z} (d+\log X) \\
& \le c_1 X (\log Z + 1)(Z^2 + Z \log X) \\
& \le c_1 X (\alpha \log X + 1)(X^{2\alpha} + X^{\alpha} \log X).
\end{split}
\end{equation}
In the last line we have set $z = X^\alpha$.

We deduce the following asymptotic for our error term, where we have fixed  $\alpha$ and allow $X$ to vary:

\begin{equation}
\sum_{d < z \text { squarefree}} |R_d(X, z)| = O(X^{1 + 2\alpha} \log X).
\end{equation}

This will be $o(X^{3/2}/\log X)$ for any $\alpha < 1/4$.  

We conclude that if we set $Z= X^{\alpha}$ for a fixed $\alpha <1/4$, the main term in \eqref{using rosser} is $O(X^{3/2}/\log X)$ and the error term is $o(X^{3/2}/\log X)$, giving the desired asymptotic.
\end{proof}

\section{Proof of Theorem~\ref{aggregate}}
We now prove Theorem~\ref{aggregate}.  We will prove it in the equivalent form

\begin{thm}\label{aggregate quad forms}
If $T(X)$ is the number of $\SL_2(\Z[\frac{1}{m}])$-equivalence classes of binary quadratic forms of discriminant $1-4m$ as $m$ runs through all integers in the range $[-X, X]$, then
\[
\lim_{X \to \infty} \frac{T(X)}{X^{3/2}} = 0
\]
\end{thm}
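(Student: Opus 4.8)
The plan is to pass to quadratic forms and isolate the two sources of growth. By the equivalence of items (vi) and (v), $T(X)=\sum_{|m|\le X} h_m$, where $h_m$ is the number of oriented ideal classes of $R_m$. By Corollary~\ref{cor: surjective} every oriented ideal class of $R_m$ is the image of one of $\Z[\gamma_m]$, and by Proposition~\ref{kernel} the fibres of this surjection are exactly the orbits of $\ker\iota_*$ acting by multiplication; since $\ker\iota_*\subset\Cl^+(\Z[\gamma_m])$ consists of invertible classes this action is free, so $h_m=N(m)/|\ker\iota_*|$, where $N(m)$ counts all oriented ideal classes of $\Z[\gamma_m]$. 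Thus $T(X)=\sum_{|m|\le X} N(m)/|\ker\iota_*|$, and I would bound the sums over $m>0$ (imaginary discriminant) and $m<0$ (real discriminant) separately.

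For $m<0$ the infinite unit group of $\Z[\gamma_m]$ already suppresses the sum below $X^{3/2}$, so I can afford the crude bound $|\ker\iota_*|\ge 1$, giving $\sum_{-X\le m<0} h_m\le\sum_{-X\le m<0}N(m)$, with $N(m)$ equal to $O(1)$ times the class number $h(1-4m)$ of a set of indefinite form classes. Siegel's asymptotic $\sum_{0<D\le Y}h(D)\log\epsilon_D\asymp Y^{3/2}$, together with the elementary regulator bound $\log\epsilon_D\gg\log D$ for any real quadratic order of discriminant $D$, yields $\sum_{D\le Y}h(D)\ll Y^{3/2}/\log Y$ after a dyadic decomposition in $D$. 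Hence the real contribution is $O(X^{3/2}/\log X)=o(X^{3/2})$, with room to spare.

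The real work is the imaginary case, where $\sum_{0<m\le X}N(m)\asymp X^{3/2}$ by Gauss, so no saving comes for free and one must exploit the kernel via the bound $|\ker\iota_*|\ge d_{\le m^{1/4}}(m)$ noted after Proposition~\ref{describe kernel}. Fix a parameter $V=V(X)\to\infty$. The part with $d_{\le m^{1/4}}(m)>V$ is at most $V^{-1}\sum_m N(m)\ll V^{-1}X^{3/2}=o(X^{3/2})$. For the complementary part I discard $m<\sqrt X$ (contributing $\ll X^{3/4}$) and observe that for $\sqrt X\le m\le X$ a number with $d_{\le m^{1/4}}(m)\le V$ has at most $W\asymp\sqrt V$ distinct prime factors below $y:=X^{1/16}$, since any such prime and any product of two of them is a divisor below $m^{1/4}$. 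It then suffices to bound $\sum_{m\le X,\ \omega_{\le y}(m)\le W}N(m)$, with $\omega_{\le y}$ the number of distinct prime factors below $y$, and I would do this by Rankin's trick: for $0<\theta<1$,
\[
\sum_{\substack{m\le X\\ \omega_{\le y}(m)\le W}}N(m)\le\theta^{-W}\sum_{m\le X}N(m)\,\theta^{\omega_{\le y}(m)}=\theta^{-W}\sum_{\substack{d\ y\text{-smooth}\\ \mu^2(d)=1}}(\theta-1)^{\omega(d)}\,S_d(X),
\]
into which I insert the estimate $S_d(X)=c_1\rho(d)X^{3/2}+R_d(X)$ from the sieving section. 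The main term contributes $c_1X^{3/2}\prod_{p\le y}\bigl(1-(1-\theta)\rho(p)\bigr)\asymp X^{3/2}(\log X)^{-(1-\theta)}$, using $\rho(p)=\tfrac{p+1}{p^2}\sim 1/p$ and Mertens; with $\theta=\tfrac12$ the whole sum is $\ll 2^{W}X^{3/2}(\log X)^{-1/2}$, which is $o(X^{3/2})$ provided $2^{W}=o(\sqrt{\log X})$, i.e.\ $W=o(\log\log X)$. Choosing $W\asymp\log\log\log X$ (so $V\asymp(\log\log\log X)^2\to\infty$) makes both parts $o(X^{3/2})$.

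The main obstacle is the imaginary case, and within it the uniform control of the error terms $R_d(X)$ after the Rankin multiplier $(\theta-1)^{\omega(d)}$ is inserted and $d$ is summed over $y$-smooth squarefree integers: the bound $|R_d(X)|\ll X\rho(d)\max(d,\log X)$ is only useful for $d$ up to about $X^{1/4}$, so the sum must be truncated and the tail absorbed, and it is exactly this loss that prevents an explicit rate and forces the extremely slow decay of the implied $o(1)$. A secondary technical point, which I would handle by working throughout with the order of discriminant $1-4m$, is that $\Z[\gamma_m]$ need not be maximal when $1-4m$ is not squarefree, so both Siegel's asymptotic and the divisor bound have to be applied to non-maximal orders.
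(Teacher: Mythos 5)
Your overall strategy coincides with the paper's: split by the sign of $m$, dispose of the real-discriminant (negative $m$) contribution via Siegel's average of $h^+(D)r(D)$ together with a lower bound on the regulator, and for positive $m$ exploit the lower bound $k_m\ge \tau_{\le m^{1/4}}(m)$ from Lemma~\ref{key lemma}, splitting according to whether $m$ has many or few small divisors. Two remarks on the parts that match. First, your identity $h_m=N(m)/|\ker\iota_*|$ is not quite right as stated: multiplication by an invertible class need not act freely on the \emph{non-invertible} oriented ideal classes (the stabilizer of a class whose multiplicator ring is an order $\cO'$ strictly containing $\Z[\gamma_m]$ is the kernel of the map to the oriented class group of $\cO'[\frac{1}{m}]$). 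The paper handles this by first stratifying forms by content $d$, dividing out the content to land in the invertible classes of the order of discriminant $(1-4m)/d^2$, and using dominated convergence in $d$; your closing remark about working with non-maximal orders gestures at this but does not substitute for it. Second, your treatment of negative $m$ actually yields a cleaner quantitative bound $O(X^{3/2}/\log X)$ than the paper records for that piece; that part is fine.

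The genuine gap is in the imaginary case, in the piece with $\tau_{\le m^{1/4}}(m)\le V$. You reduce to bounding $\sum_{m\le X,\ \omega_{\le y}(m)\le W}N(m)$ with $y=X^{1/16}$ and apply Rankin's trick, expanding $\theta^{\omega_{\le y}(m)}$ as a sum of $(\theta-1)^{\omega(d)}S_d(X)$ over squarefree $y$-smooth $d$. The main terms behave as you say, but the error terms do not: the admissible $d$ range all the way up to $X$, and the only available bound $|R_d(X)|\ll X\rho(d)\max(d,\log X)\ll X\log X$ per term is useless once $d$ exceeds roughly $X^{1/4}$; summed over the very many squarefree $X^{1/16}$-smooth $d\le X$ (a positive proportion of all squarefree integers up to $X$), even with the factor $(1-\theta)^{\omega(d)}$ the total swamps $X^{3/2}$. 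You flag this as ``the sum must be truncated and the tail absorbed,'' but that truncation is precisely the content of a fundamental-lemma-type sieve argument and is not supplied; without it the claimed bound $\ll 2^{W}X^{3/2}(\log X)^{-1/2}$ is unproved. The paper avoids the issue entirely by keeping the sifting data fixed: for fixed $N$ and $r$ the condition $\tau_r(m)<N$ depends only on $m$ modulo $\operatorname{lcm}(1,\dotsc,r)$, so a single application of Davenport's lemma gives $\rho(N,r)\kappa_2X^{3/2}+O_{N,r}(X\log X)$, and one lets $X\to\infty$ first, then $r\to\infty$, then $N\to\infty$. That is exactly why the paper's conclusion is an inexplicit $o(X^{3/2})$. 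Either fix $y$ independently of $X$ (which collapses your argument back to the paper's and forfeits the rate) or carry out the truncation and tail estimate honestly.
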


Gauss's bound for the total number of $\SL_2(\Z)$-equivalence classes of binary quadratic forms with discriminant in this range is $O(X^{3/2})$, so this theorem says that strengthening the equivalence relation to $\SL_2(\Z[\frac{1}{m}])$-equivalence decreases the order of growth.

To prove this, we will need the following key lemma, which gives a lower bound on the size of the kernel of $\iota_* : \Cl^{+} (\Z[\gamma_m]) \to \Cl^+(R_m)$.  

\begin{defn}
	For an integer $m$, let $k_m = |\ker(\iota_*: \Cl^{+} (\Z[\gamma_m]) \to \Cl^+(R_m))|$.
\end{defn}

\begin{defn}
Let $\omega(m)$ denote the number of prime divisors of $m$.
\end{defn}

\begin{lemma}\label{key lemma}
    For any positive $m$, we have $k_m \ge \omega(m)-3$.
\end{lemma}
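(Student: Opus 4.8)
The plan is to produce, for each divisor $d$ of $m$, an explicit element $h_d \in \ker \iota_*$, and then to show that these are pairwise distinct whenever the divisors are at most $m^{1/4}$. This will give the stronger estimate $k_m \ge d_{\le m^{1/4}}(m)$ (the number of divisors of $m$ that are at most $m^{1/4}$), from which the stated bound $k_m \ge \omega(m) - 3$ follows by a short counting argument. For the setup, for each prime $p \mid m$ write $\mathfrak{p}_p = (p, \gamma_m)$; since $1 - 4m \equiv 1 \pmod p$, the prime $p$ splits as $\mathfrak{p}_p \bar{\mathfrak{p}}_p = (p)$ with $N\mathfrak{p}_p = p$, and $\mathfrak{p}_p$ is invertible (as $p \nmid 1-4m$, so $p$ does not divide the conductor). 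For $d \mid m$ I set $I_d = \prod_{p \mid d} \mathfrak{p}_p^{v_p(d)}$, an invertible integral ideal of norm $d$, and define $h_d = [I_d^2 (d)^{-1}, 1] = \prod_{p \mid d} \mathfrak{g}_p^{v_p(d)}$. By Proposition~\ref{describe kernel} each $\mathfrak{g}_p$ lies in $\ker \iota_*$, so every $h_d \in \ker \iota_*$, and $h_1$ is the identity.

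The heart of the argument is the claim that if $d, d' \mid m$ satisfy $d, d' \le m^{1/4}$ and $h_d = h_{d'}$, then $d = d'$. Forgetting the orientation, $h_d = h_{d'}$ forces the ordinary ideal classes of $I_d^2$ and $I_{d'}^2$ to agree, i.e. $(I_d \bar{I}_{d'})^2$ is principal. Writing $g = \gcd(d, d')$, one factors $I_d \bar{I}_{d'} = (g) J$ with $J$ a primitive ideal of norm $NJ = dd'/g^2$, so the principality of $(I_d \bar{I}_{d'})^2$ is equivalent to $J^2 = (\beta)$ being principal, where $N\beta = (NJ)^2 = (dd'/g^2)^2 \le (dd')^2 \le m$. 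The key elementary input is that for $y \ne 0$ one has $N(x + y\gamma_m) = x^2 + xy + my^2 \ge m$, so \emph{every} element of $\Z[\gamma_m]$ of norm strictly less than $m$ is rational. Hence $\beta$ is a rational integer, which forces $J^2$, and therefore $J$, to be a rational ideal; but $J$ is a product of distinct split primes $\mathfrak{p}_p$ unpaired with their conjugates, so this is possible only if $J = (1)$, i.e. $d = d'$. The boundary case $N\beta = m$ is disposed of separately: the only elements of norm exactly $m$ are associates of $\gamma_m$ or $1 - \gamma_m$, whose ideals are $\prod_{p \mid m} \mathfrak{p}_p^{v_p(m)}$ and its conjugate, and these cannot equal a square $J^2$ unless every $v_p(m)$ is even, a case that can be ruled out by comparing exponents against $NJ^2 = m$.

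For the counting step, I observe that at most three prime divisors of $m$ can exceed $m^{1/4}$: four such primes would have product dividing $m$ yet exceeding $m$. Therefore at least $\omega(m) - 3$ prime divisors of $m$, together with the divisor $1$, lie below $m^{1/4}$, yielding at least $\omega(m) - 2$ distinct classes $h_d$ in $\ker \iota_*$. This gives $k_m \ge d_{\le m^{1/4}}(m) \ge \omega(m) - 2 \ge \omega(m) - 3$, matching (and slightly beating) the claimed bound, with the slack absorbing any edge effects at the threshold $m^{1/4}$.

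The main obstacle is the injectivity claim in the second paragraph: precisely controlling when two of the explicit classes $h_d$ coincide. Everything rests on the minimal-norm bound $N(x + y\gamma_m) \ge m$ for $y \ne 0$, which is exactly what converts the size restriction $d \le m^{1/4}$ into genuine distinctness of the $h_d$. I expect the remaining care to go into the boundary case $N\beta = m$ and, because $\Z[\gamma_m]$ is generally a non-maximal order and we work in the oriented class group, into checking that the split-prime factorization of $(\gamma_m)$ is never an even power; none of this should affect the final bound.
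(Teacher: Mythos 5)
Your proposal is correct and follows essentially the same route as the paper: the same family of kernel classes $[(s,\gamma_m),s]^2$ indexed by divisors $s\le m^{1/4}$, the same counting step (at most three prime divisors can exceed $m^{1/4}$), and the same key input that every non-rational element of $\Z[\gamma_m]$ has norm at least $m$. The only cosmetic difference is in how distinctness is verified --- the paper compares the homothety invariant (minimal element norm)/(ideal norm) of $I_s^2$, which equals $s^2$, while you reduce to the non-principality of a primitive ideal of small norm --- but both arguments rest on exactly the same bound on the norm form, and your handling of the boundary case $N\beta=m$ does go through.
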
  

\begin{proof}
    Write $r = m^{1/4}$.  We will show the stronger bound $k_m \ge \tau_{\le r} (m)$, where $\tau_{\le r}(m)$ is the the number of divisors of $m$ that are $\le r$.  This implies the lemma, as $m$ can have at most $3$ prime factors $> m^{1/4}$.

	The oriented ideal classes $\mathfrak{c}_s = [I_s, s]^2$, where $I_s = (s, \gamma_m)$ for $s$ a divisor of $m$, all belong to $\ker \iota_*$. 
	
	Hence it's enough to show that the classes $[I_s, s]^2$, for $1 \le s \le r$ dividing $m$, are all distinct.  We do this by showing that the fractional ideals $I_s^2= (s^2, \gamma_m-m)$ are not homothetic. 
	
	The element of smallest norm in $I_s^2$ is $s^2$, which has norm $s^4$ (here we are using $m$ positive and $> s^4$).  On the other hand, $I_s^2$ itself has norm $s^2$.  Since any homothety between two ideals must scale both the ideal norm and the norm of the smallest element by the same factor, this means that $I_s^2$ is not homothetic to $I_t^2$ for $s \ne t$.
\end{proof}

\begin{rem}
Note that we needed to use $m$ positive in this lemma.  Based on the Cohen-Lenstra heuristics for real quadratic fields, we expect $\Cl^+(R_m)^2$, so also its subgroup $\ker \iota_*$, to be trivial for a positive density of negative integers $m$.  Hence there should not be an analogous result for $m$ negative.
\end{rem}

We will need to be a little careful in our bookkeeping, because not all quadratic forms are primitive.  Recall that the {\em content} of a binary quadratic form $ax^2 + bxy + cy^2$ is defined to be $\content(Q) = \gcd(a, b, c)$ (so primitive forms have content $1$).  This notion still makes sense for binary quadratic forms over $\Z[1/m]$, where we normalize so that $\content(Q)$ is an integer relatively prime to $m$.  (Since all forms we work with have discriminant relatively prime to $m$, this will cause no confusion.)  In order to bound our weighted count we will first need to divide up by content.

\begin{defn} For $d \ge 1$, let be $T_d(X)$ the total number of $\SL_2(\Z[1/m])$-equivalence classes of binary quadratic forms with content $d$ and discriminant in $[-X, X]$.
\end{defn}

Dividing out by the content gives a bijection between $\SL_2(\Z[1/m])$-equivalence classes of  binary quadratic forms of discriminant $1-4m$ and content $d$ and $\SL_2(\Z[1/m])$-equivalence classes  of primitive binary quadratic forms of discriminant $(1-4m)/d^2$.   The latter in turn correspond to elements of the narrow class group $\Cl^+(\cO_{(1-4m)/d^2} [\frac{1}{m}])$.

We conclude that
\[
T_d(X) = \sum_{\substack{m \in [-X, X] \\ 4m \equiv 1 \pmod{ d^2}}} |\Cl^+(\cO_{(1-4m)/d^2} [\frac{1}{m}])|,
\]
where $\cO_{(1-4m)/d^2}$ is the quadratic order of discriminant $(1-4m)/d^2$.  We are now ready to prove Theorem~\ref{aggregate quad forms}.

\begin{proof}[Proof of Theorem~\ref{aggregate quad forms}]
We have $X^{-3/2} T(X) = \sum_{d \ge 1} X^{-3/2} T_d(X)$.  We claim that this sum satisfies the conditions of the dominated convergence theorem.  Indeed, for every $d$, $T_d(X)$ is at most the number of $\SL_2(\Z)$-equivalence classes of primitive binary quadratic forms with odd discriminant in the range $[-d^{-2}X, d^{-2} X]$. By Gauss's count of binary quadratic forms, the latter is bounded above by $c d^{-3} X^{3/2}$ for some uniform constant $c$.  Hence $ X^{-3/2} T_d(X) \le c d^{-3}$ for all $X$, and so the series  $\sum_{d \ge 1} X^{-3/2} T_d(X)$ is dominated by the convergent series  $\sum_{d \ge 1} c d^{-3}$.

So it suffices to prove $\lim_{X \to \infty} X^{-3/2} T_d(X) = 0$ for any fixed $d$.

We first reduce to the case $d=1$, by showing that the sum $T_1$ termwise dominates all the $T_d$. For this, note that the natural map $\Cl^+(\cO_{1-4m}) [\frac{1}{m}] \to \Cl^+(\cO_{(1-4m)/d^2)} [\frac{1}{m}])$ is surjective.  Hence 
\[
|\Cl^+(\cO_{1-4m}) [\frac{1}{m}]| \ge |\Cl^+(\cO_{(1-4m)/d^2}) [\frac{1}{m}]|
\]
as desired.

Hence it's enough to show that $T_1(X) = o(X^{3/2})$.  Write $T(X) = T_1^{\rm{pos}}(X) + T_1^{\rm{neg}}(X)$, where $T_1^{\rm{pos}}(X)$ is the total contribution of the terms with $m > 0$ and $T_1^{\rm{neg}}(X)$ is total of those with $m \le 0$.

We introduce the notation $h^+(D) = |\Cl^+(\cO_D)|$ for the oriented class number: for $D > 0$ this is the narrow class number, while for $D < 0$ it is twice the usual class number.

We can bound $T_1^{\rm{neg}}(X)$ by comparison with the partial sum of class numbers of positive discriminants.    Then
\[
T_1^{\rm{neg}}(X) \le  \sum_{m \in (-X, 0)} |h^+(1-4m)| \le \sum_{D \in (0, 4X) } h^+(D).
\]

It is well known that this latter sum is $o(X^{3/2})$; for completeness we give the argument here.  To dispose of the square values of $D$, note that $h(a^2) = O(a)$, so $\sum_{a \in (0, 2 \sqrt{X})} h^+(a^2) = O(X)$.  For the rest, we use Siegel's result \cite{siegel-average_measure}, proving a conjecture of Gauss, that $\sum_{D \ne \square \in (0, 4X)}  h^+(D)r(D) \sim c X^{3/2}$. Here the sum is over all non-square discriminants in the range $(0, 4X)$, and $r(D)$ is the regulator of the real quadratic order of discriminant $D$.  Since $r(D) \to \infty$ as $D \to \infty$, it follows that $\sum_{D \ne \square \in (0, 4X)}  h^+(D) = o(X^{3/2})$ also.

We must now show that $T_1^{\rm{pos}}(X) = o(X^{3/2})$.  

We have
\[
T_1^{\rm{pos}}(X) = \sum_{m \in (0, X)} |\Cl^+(\cO_{1-4m} [\frac{1}{m}])| =  \sum_{m \in (0, X)} \frac{h^+(1-4m)}{k_m}.
\]
by Lemma~\ref{key lemma}, where $h^+(1-4m)$ is the narrow class number of the quadratic order $\cO_{1-4m}$.  

Now, we choose a parameter $N$, and split the sum depending upon whether the number of prime divisors $\omega(m)$ is greater than or less than $N$:
\[
T_1^{\rm{pos}}(X) = T_1^\sharp(X) + T_1^\flat(X) = \sum_{\substack{m \in (0, X) \\ \tau(m) \ge N}} \frac{ h^+(1-4m)}{k_m} + \sum_{\substack{m \in (0, X) \\ \tau(m)<N}} \frac{2 h^+(1-4m)}{\tau(m)}
\]

The first term $T_1^\sharp(X)$ is easy to bound:
\begin{equation}\label{bound sharp}
T_1^\sharp(X) = \sum_{\substack{m \in (0, X) \\ \omega(m) \ge N}} \frac{h^+(1-4m)}{k_m} \le  \frac{1}{N}  \sum_{m \in (0, X)} h^+(1-4m) =  \kappa_1 \left ( \frac{X^{3/2}}{N} \right)
\end{equation}
for an explicit constant $\kappa_1$.

Now we deal with $T_1^{\flat}(X)$, which is the total contribution of those positive $m$ with $<N$ factors:  these have density $0$ for fixed $N$, so their contribution should be $o_N(X^{3/2})$.  We formalize this as follows: pick a real parameter $r$.  As before $\tau_r(m)$ denote the number of divisors of $m$ that are $< r$.  Then
\[
T_1^\flat(X) = \sum_{\substack{m \in (0, X]\\ \tau(m) < N}} \frac{2 h^+(1-4m)}{\tau(m)} \le 2 \sum_{\substack{m \in (0, X) \\ \tau_r(m) < N}} h^+(1-4m).
\]

We can then obtain an upper bound for this latter sum of class numbers by means of Davenport's Lemma. As before, let $\mathcal{F}$ be the standard fundamental domain for the action of $\SL_2(\Z)$ on positive definite binary quadratic forms.  Then
\[
\sum_{\substack{m \in [0, X] \\ \tau_r(m) < N \\ 1-4m \ne \square}} h^+(1-4m) = \# \{(a, b,c) \in \Z^3 \cap \mathcal{F} : b^2 - 4ac = 1-4m \text{ for } m \in [-X, X] \text{ with } \tau_r(m)<N \}  
\]
Because the function $\tau_r(m)$ only depends on $m$ modulo $\gcd(1, \dotsc, r)$, we can use Davenport's lemma as in \eqref{apply davenport} to obtain an estimate of the form
\[
\# \{(a, b,c) \in \Z^3 \cap \mathcal{S} : b^2 - 4ac = 1-4m \text{ for } m \in [-X, X] \text{ with } \tau_r(m)<N \}   = \rho(N, r) \kappa_2 X^{3/2}  + O_{N, r}(X \log X).
\]
where $\rho(N, r)$ is an explictly computable density that goes to $0$ as $r \to \infty$ for fixed $N$.  

We conclude that
\begin{equation}\label{bound flat}
T_1^\flat(X) \le  \rho(N, r) \kappa_2 X^{3/2}  + O_{N, r}(X \log X).
\end{equation}

Combining equations \eqref{bound sharp} and \eqref{bound flat}, and looking at the asymptotic behavior we conclude that

\[
\lim_{n \to \infty} X^{-3/2} T_1(X)   \le  \frac{1}{N} \kappa_1 + \rho(N, r) \kappa_2.
\]
for any $N, r$.  Letting $r \to \infty$ first and then $N \to \infty$, we conclude $\lim_{n \to \infty} X^{-3/2} T_1(X) = 0$.

\end{proof}
\bibliography{genus1}{}
\bibliographystyle{plain}

\end{document}